\theoremstyle{definition}
\newtheorem{theorem}{Theorem}[section]
\newtheorem{definition}[theorem]{Definition}
\newtheorem{lemma}[theorem]{Lemma}
\newtheorem{proposition}[theorem]{Proposition}
\theoremstyle{remark}
\newtheorem{remark}[theorem]{Remark}
\newtheorem{example}[theorem]{Example}
\newcommand{\arxiv}[1]{\href{http://arxiv.org/abs/#1}{\textsf{arXiv:#1}}}
\newcommand{\crk}{\operatorname{crk}}
\newcommand{\ssm}{\smallsetminus}
\newcommand{\rk}{\operatorname{rk}}
\newcommand{\C}{{\mathbb{C}}}
\newcommand{\Z}{{\mathbb{Z}}}
\newcommand{\Q}{{\mathbb{Q}}}
\newcommand{\IH}{I\! H}
\newcommand{\IC}{\mathop{\mathbf{IC}}}
\newcommand{\cal}{\mathcal}
\newcommand{\cH}{{\cal H}}
\renewcommand{\setminus}{\smallsetminus}
\newcommand{\Pal}{\mathsf{Pal}}
\title{Kazhdan--Lusztig polynomials of matroids under deletion}
\author{Tom Braden}
\address{Department of Mathematics and Statistics, University of Massachusetts, Amherst, MA.}
\email{braden@math.umass.edu}
\author{Artem Vysogorets}
\address{Center for Data Science, NYU.}
\email{us441@nyu.edu}
\begin{document}

\begin{abstract}
We present a formula which relates the Kazhdan--Lusztig polynomial of a matroid $M$, as defined by Elias, Proudfoot and Wakefield, to the Kazhdan--Lusztig polynomials of the matroid  obtained by deleting an element, and various contractions and localizations of $M$.   We give a number of applications of our formula to Kazhdan--Lusztig polynomials of graphic matroids, including a simple formula for the Kazhdan--Lusztig polynomial of a parallel connection graph.
\end{abstract}

\maketitle

\section{Introduction}

In \cite{EPW}, Elias, Proudfoot and Wakefield defined a polynomial
invariant $P_M(t)$ associated to any matroid $M$, which they 
called the \textbf{Kazhdan--Lusztig polynomial} of $M$.  Their definition
is formally similar to the polynomials $P_{x,y}(t)$
that were defined by Kazhdan and Lusztig \cite{KL} for elements $x, y$ in a Coxeter group $W$.  The coefficients of $P_M(t)$ depend only on the lattice of flats $L(M)$, and in fact they are integral linear combinations of the flag Whitney numbers counting chains of flats with specified ranks.

In this paper, we study how $P_M(t)$ behaves under deletion of an element from the ground set.  Our main result,
Theorem~\ref{thm:deletion formula}, is a formula relating the Kazhdan--Lusztig polynomial
of the deletion $M\ssm e$ to the Kazhdan--Lusztig polynomials of $M$ and
various contractions and localizations of $M$. Assume that $M$ is a simple matroid, and that $e$ is not a coloop of $M$.  Then our formula says that 
\begin{equation}\label{eqn:deletion formula intro}
P_M(t) = P_{M \ssm e}(t) - tP_{M_e}(t) + \sum_{F\in S} \tau(M_{F\cup e})\, t^{(\crk F)/2} P_{M^F}(t).
\end{equation}
Here the sum is taken over the set $S$ of all subsets $F$ of $E \ssm e$ such that $F$ and $F \cup e$ are both flats of $M$ 
 (any such $F$ is automatically also a flat of $M\ssm e$), and $\tau(M)$ is the coefficient of $t^{(\rk M -1)/2}$ in $P_M(t)$ if $\rk M$ is odd, and zero otherwise.
We also give a similar formula
for the closely related \textbf{$Z$-polynomial}
\[Z_M(t) = \sum_{F\in L(M)} t^{\rk F}P_{M_F}(t),\]
which was introduced in \cite{PXY}.  

Since all of the matroids appearing on the right side of \eqref{eqn:deletion formula intro} have a smaller ground set than $M$ does, it is natural to apply this formula to inductive computations of $P_M(t)$.  The challenge to carrying this out successfully is the complexity of the sum in the last term.  In the final part of the paper we present some applications of our formula to graphic matroids where the sum simplifies enough to make the formula useful.

In particular, we get a very simple formula for Kazhdan--Lusztig polynomials of \textbf{parallel connection graphs}: if $G$ is obtained by gluing graphs $H_1$ and $H_2$ at an edge $e$ common to both, and $H_1 \setminus e$, $H_2 \setminus e$ are both connected, then 
\[P_G(t) = P_{G \setminus e}(t) - tP_{H_1/e}(t)P_{H_2/e}(t).\]
Here we put $P_G(t) = P_{M_G}(t)$ when $G$ is a graph.
We use this result to give a simpler proof of a formula of Liu, Xie and Yang \cite{LXY} for the Kazhdan--Lusztig polynomials of fan graphs.

\subsection{Motivation from algebraic geometry}

Our results and our methods in this paper are purely combinatorial, but the motivation comes from algebraic geometry.  In this section, which is not needed for the rest of the paper, we briefly explain the geometry behind the formula \eqref{eqn:deletion formula intro}.
 
The Kazhdan--Lusztig polynomial of a realizable matroid $M$ is the local intersection cohomology Poincar\'e polynomial of a variety defined as follows.  Suppose that $M$ is realized by a spanning collection $w_1, \dots, w_{n}$ of nonzero vectors in a vector space $W \cong \C^{d}$, where $d = \rk M$.  This induces a surjective 
map $\C^{n} \to W$, and dualizing gives an injection $W^* \to \C^{n}$.  Let $V\cong \C^d$ be the image of this map, and define $Y = Y(w_1,\dots, w_n)$ to be the closure of $V$ inside $(\mathbb P^1_\C)^n$.  Then
$P_M(t)$ is the Poincar\'e polynomial  of the local intersection cohomology of $Y$ at the most singular point $\infty^n$ and $Z_M(t)$ is the Poincar\'e polynomial  of the total intersection cohomology $\IH^{\bullet}(Y ;\Q)$.  (All intersection cohomology groups considered in this discussion vanish in odd degrees, and all Poincar\'e polynomials should be taken in $t^{1/2}$.)

The variety $Y$ was called the \textbf{Schubert variety} of $V$ in \cite{PXY}, because of the similarities it has with the geometry of Schubert varieties in flag varieties of reductive groups. 
In particular, $Y$ has a stratification $Y = \coprod_{F \in L(M)} C_F$ 
by affine spaces $C_F \cong \C^{\rk F}$ indexed by flats of $M$; the strata are orbits of the natural action of the additive group $(V, +)$ on $Y$.   
Closures of strata and normal slices to strata are again varieties of the same type.  (Note that for a Schubert variety in a flag variety, a normal slice to a Schubert cell cannot in general be identified with another Schubert variety.) 
The closure $Y^F := \overline{C_F}$ of a stratum is isomorphic to the variety 
$Y(w_{i_1}, \dots, w_{i_k})$, where $F= \{i_1,\dots, i_k\}$, and the vector space $W$ is replaced by the span of $w_{i_1},\dots, w_{i_k}$.
A normal slice to $Y$ at a point of $C_F$ is isomorphic to $Y(\bar w_{j_1}, \dots, \bar w_{j_r})$, where $\{j_1,\dots, j_r\} = \{1,\dots, n\} \setminus F$ and $\bar w_j$ is the image of $w_j$ in the quotient $W/\operatorname{span}(w_{i_1}, \dots, w_{i_k})$.  These varieties correspond to the localization and contraction matroids $M^F$ and $M_F$, respectively.  (See the beginning of the next section for definitions and notation of localization and contraction.)

Suppose that the element we are deleting from $M$ is $e = n$.  Then our assumption that $n$ is not a coloop means that $w_1, \dots, w_{n-1}$ still span $W$, and following the same construction shows that the variety $Y' = Y(w_1,\dots, w_{n-1})$ associated to the deletion $M\ssm e$ is the image of $Y$ under the projection 
$(\mathbb P^1)^n \to (\mathbb P^1)^{n-1}$ which forgets the last factor.  Let $p\colon Y \to Y'$ denote the map induced by this projection.
We can define a stratification $Y' = \coprod_{G \in L(M\ssm e)} C'_{G}$ the same way as before, and the map $p\colon Y \to Y'$ sends strata to strata.

The fibers of $p$ are easy to describe: either
 $p^{-1}(x)$ is a single point or it is isomorphic to $\mathbb P^1$, and it is $\mathbb P^1$ if and only if $x$ lies in a  
stratum $C'_F$ where $F$ and $F\cup e$ are flats of $M$, i.e.\ $F$ is in the set $S$ summed over in \eqref{eqn:deletion formula intro}.
Because of this, the decomposition theorem of 
Beilinson, Bernstein, Deligne and Gabber takes a particularly simple form: the 
 direct image $p_*\IC(Y; \Q)$ of the intersection complex of $X$ 
 is isomorphic to a direct sum
\begin{equation}\label{eqn:semi-small decomposition} \IC(Y';\Q) \oplus \bigoplus_{F\in S} \IC(\overline{C'_F};\Q)^{\oplus \tau(M_{F\cup e})}[-(\crk F)/2] .
\end{equation}
 

Our formula \eqref{eqn:deletion formula intro} comes from taking the stalk cohomology of $p_*\IC(Y; \Q)$ at the point stratum $C'_{\emptyset}$.  By proper base change this is
\[\mathbb H^\bullet(\IC(Y;\Q)|_{p^{-1}(C'_\emptyset)}) =\mathbb H^\bullet(\IC(Y;\Q)|_{C_\emptyset \cup C_e}),\]
which has Poincar\'e polynomial $P_M(t) + tP_{M_e}(t)$, while  the stalk of the sum \eqref{eqn:semi-small decomposition}
has Poincar\'e polynomial given by the remaining terms of \eqref{eqn:deletion formula intro}.

Our formula is analogous to the convolution formula
\begin{equation}\label{eqn:Hecke convolution}
C_sC_w = C_{sw} + \sum_{sz < z} \mu(z,w)C_z
\end{equation}
that governs Kazhdan--Lusztig basis elements $\{C_x\}_{x\in W}$ in the Hecke algebra $\mathcal H(W)$ (see \cite[equation (22)]{H}, for instance).  Here $s$ is a simple reflection and $sw > w$.  This formula arises from analyzing a map $\widetilde{X} \to X_{sw}$ which is similar to our map $Y \to Y'$.  Here $X_{sw}$ is a Schubert variety, and $\widetilde{X}$ is a $\mathbb P^1$-bundle over a smaller Schubert variety $X_w$. Again the fibers are either points or $\mathbb P^1$, and the analysis of the decomposition theorem is essentially the same.

There is one important difference, however.  In \eqref{eqn:Hecke convolution} all of the terms except $C_{sw}$ involve basis elements $C_z$  for $z \le w$, so it gives a recursive computation of $C_{sw}$.  In fact this formula was used by Kazhdan and Lusztig \cite{KL} to prove the existence of the basis elements $C_x$.  The expression corresponding to $\widetilde{X}$ is $C_sC_w$, reflecting the structure of $\widetilde{X}$ as a $\mathbb P^1$-bundle.  On the other hand, in our situation the variety $Y$ ``upstairs" is in general more complicated than $Y'$, and doesn't have a simple relation with lower-dimensional varieties of the same type.
As a result, the power of our formula in inductive computations and proofs is  more limited.

\subsubsection*{Acknowledgements}
The authors thank Jacob Matherne and Nicholas Proudfoot for helpful suggestions on a draft of this paper, and the anonymous referee for numerous corrections and improvements.

\section{The deletion formula}

\subsection{Matroid terminology}

Let $M$ be a matroid on a ground set $E$.  One of the many equivalent ways to define a matroid is by its flats, which are subsets of $E$ satisfying
\begin{itemize}
\item $E$ is a flat,
\item if $F, G$ are flats, then $F \cap G$ is a flat, and
\item for any flat $F$, the complement $E \ssm F$ is partitioned by the sets $G \ssm F$ where $G$ runs over all flats which cover $F$.
\end{itemize}
The set of all flats ordered by inclusion is a ranked lattice which we denote $L(M)$, and we let $\rk\colon L(M) \to \Z_{\ge 0}$ be its rank function.  All of the invariants we consider depend only on $L(M)$ up to isomorphism as a ranked poset.

We will assume that $M$ is \textbf{simple}, which means that the empty set is a flat and the rank one flats are exactly all singleton sets $\{e\}$, $e\in E$.  This is not a real restriction, as any matroid has a simplification with an isomorphic lattice of flats.  To simplify notation, we omit the braces when referring to singleton flats, or when adding or deleting a single element from a flat or matroid.

Three operations on matroids will be important.  Given any flat $F \in L(M)$, the \textbf{contraction} $M_F$ is the matroid with ground set $E \ssm F$ whose lattice of flats is 
$\{G \setminus F \mid G\in L(M)\ \mbox{ and }\ G\ge F\}$.
(More precisely, since this may not be a simple matroid, we can take its simplification.)

The \textbf{localization} $M^F$
is the matroid with ground set $F$ whose lattice of flats is $\{G \in L(M) \mid G \le F\}$.  We can combine contraction and localization: for $F \le G$, the matroids 
$(M_F)^{G \setminus F}$ and $(M^G)_F$ are isomorphic, and we denote them $M_F^G$.  The reader should beware that our notation is opposite to the one used in \cite{EPW}, where $M_F$ denoted the localization and $M^F$ denoted the contraction.

The third operation is \textbf{deletion}.  In this paper we will only consider deleting a single element $e \in E$.  The deletion matroid $M \ssm e$ is a matroid on the set $E \ssm e$ whose lattice of flats is 
\[\{F \ssm e \mid F \in L(M)\}.\]
Note that the localization $M^F$ can also be expressed as the iterated deletion of all elements of $E \setminus F$.  However, in our formulas the two operations play a somewhat different role, so we will keep the terminology separate.

\subsection{Kazhdan--Lusztig polynomials}

In this section we define the Kazhdan--Lusztig polynomials of matroids, using an alternate definition based on a result of Proudfoot, Xu and Young \cite{PXY}.

For any integer $n\ge 0$, let  $\Pal(n) \subset \Z[t, t^{-1}]$ be the set of all Laurent polynomials
such that $f(t) = t^n f(t^{-1})$.  In other words,
$\sum_{k=-N}^N a_kt^k$ lies in $\Pal(n)$ if and only if $a_k = a_{n-k}$ for all $k$.

\begin{lemma}\label{lem:palindromic}
For any  $f\in \Z[t,t^{-1}]$ and any $d \ge 0$, there exists a unique $g \in \Z[t,t^{-1}]$ with $\deg g < d/2$ so that $f + g \in \Pal(d)$.  If $f \in \Z[t]$ and $\deg f \le d$, then $g \in \Z[t]$.
\end{lemma}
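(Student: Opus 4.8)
The plan is to solve for $g$ coefficient by coefficient. Write $f(t) = \sum_{k\in\Z} a_k t^k$ (finitely many $a_k$ nonzero) and look for $g(t) = \sum_k b_k t^k$. The hypothesis $\deg g < d/2$ says exactly that $b_k = 0$ whenever $2k \ge d$, so $g$ is supported on exponents $j$ with $2j < d$; note that when $d$ is even this excludes the middle exponent $j = d/2$, and when $d$ is odd there is no middle exponent at all. Writing $h = f + g = \sum_k c_k t^k$ with $c_k = a_k + b_k$, the condition $h \in \Pal(d)$ is the system of equations $c_k = c_{d-k}$, $k \in \Z$.

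The key point is that the involution $k \mapsto d-k$ pairs up the integers, and for any pair $\{k, d-k\}$ with $k \ne d-k$ exactly one member $j$ has $2j < d$ while the other has $2j > d$. For such a pair the equation $c_j = c_{d-j}$ reads $a_j + b_j = a_{d-j}$ (since $b_{d-j} = 0$), which forces $b_j = a_{d-j} - a_j$ and, conversely, is satisfied by that choice. When $d$ is even the fixed exponent $d/2$ gives the vacuous equation $c_{d/2} = c_{d/2}$. Hence the system has the unique solution
\[ g(t) \;=\; \sum_{j\,:\,2j < d} (a_{d-j} - a_j)\, t^j, \]
which proves both existence and uniqueness. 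It then remains only to note that $\deg g < d/2$ by the range of summation, and to double-check palindromicity directly: for $2j < d$ one computes $c_j = a_{d-j} = c_{d-j}$.

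For the final claim, assume $f \in \Z[t]$ with $\deg f \le d$, so $a_k = 0$ for $k < 0$ and for $k > d$. If an index $j < 0$ occurs in the sum above, then $a_j = 0$ (negative index) and $a_{d-j} = 0$ (index exceeding $d$), so its coefficient vanishes; thus $g$ has no negative-degree terms and lies in $\Z[t]$. I do not expect a genuine obstacle here: the argument is elementary bookkeeping with a single linear system. The only points requiring care are the precise meaning of the strict inequality $\deg g < d/2$ — and hence the status of the middle coefficient when $d$ is even — and keeping the index shift $j \leftrightarrow d-j$ straight, which together make the uniqueness argument transparent.
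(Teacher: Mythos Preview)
Your argument is correct. The paper actually states this lemma without proof, treating it as elementary, so there is no ``paper's own proof'' to compare against; your coefficient-by-coefficient bookkeeping is exactly the standard way to see it and handles the parity of $d$ and the second claim about $g\in\Z[t]$ cleanly.
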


\begin{theorem}[\cite{PXY}]\label{thm:KL poly def}
There is a unique family of polynomials $P_M(t) \in \Z[t]$ defined for all matroids $M$ with the following properties:
\begin{enumerate}
\item[(a)] If $\rk M =0$ then $P_M(t) = 1$.
\item[(b)] For all matroids of positive rank, the degree of $P_M(t)$ is strictly less than $(\rk M)/2$.
\item[(c)] For all matroids $M$, the polynomial
\begin{equation}\label{eqn:Z-polynomial}
Z_M(t) := \sum_{F\in L(M)} t^{\rk F}P_{M_F}(t)
\end{equation}
is in $\Pal(\rk M)$. 
\end{enumerate}
\end{theorem}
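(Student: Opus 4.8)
The plan is to prove existence and uniqueness together by induction on $\rk M$, solving for $P_M(t)$ out of the defining relation \eqref{eqn:Z-polynomial} by means of Lemma~\ref{lem:palindromic}. In the base case $\rk M = 0$, condition (a) forces $P_M(t) = 1$, and then $Z_M(t) = t^{0}P_M(t) = 1 \in \Pal(0)$ while (b) is vacuous; so (a)--(c) all hold and determine $P_M$ uniquely.

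For the inductive step I would fix $M$ with $d := \rk M > 0$; one may assume $M$ is simple, since the recursion and the conditions (a)--(c) involve only $L(M)$, and any matroid has a simplification with an isomorphic lattice of flats. Suppose $P_N(t)$ has already been built --- and shown to be the unique admissible choice --- for every matroid $N$ with $\rk N < d$. For the empty flat we have $M_{\emptyset} = M$, so the $F = \emptyset$ summand of \eqref{eqn:Z-polynomial} is $P_M(t)$ itself, whereas every other flat $F$ has $\rk F \ge 1$, hence $\rk M_F = d - \rk F < d$ and $P_{M_F}(t)$ is already known. Separating off the empty flat, \eqref{eqn:Z-polynomial} reads
\[
Z_M(t) = P_M(t) + f(t), \qquad f(t) := \sum_{\emptyset \ne F \in L(M)} t^{\rk F}\,P_{M_F}(t),
\]
where $f \in \Z[t,t^{-1}]$ is completely determined by the inductive hypothesis. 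We want $P_M(t)$ with $\deg P_M < d/2$ and $P_M + f \in \Pal(d)$; Lemma~\ref{lem:palindromic} supplies a unique such element $g \in \Z[t,t^{-1}]$, and we set $P_M(t) := g(t)$. Then (b) and (c) hold by construction.

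The step I expect to require the most care is verifying that $g$ really lies in $\Z[t]$, not merely in $\Z[t,t^{-1}]$; by the second assertion of Lemma~\ref{lem:palindromic} this follows as soon as $\deg f \le d$. The term indexed by the top flat $E$ equals $t^{d}P_{M_E}(t) = t^{d}$, of degree $d$; for any flat $F$ with $1 \le \rk F < d$, the inductive instance of (b) gives $\deg P_{M_F} < (d - \rk F)/2$, so $\deg\!\big(t^{\rk F}P_{M_F}\big) < \rk F + (d-\rk F)/2 = (d+\rk F)/2 < d$. Hence $\deg f \le d$, so $P_M = g \in \Z[t]$, which completes the existence half.

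For uniqueness I would run the same induction. If $P'_{\bullet}$ is a second family satisfying (a)--(c), then $P'_N = P_N$ for every $N$ with $\rk N < d$, so the polynomial $f$ above is the same for both families; the conditions $\deg P'_M, \deg P_M < d/2$ together with $P'_M + f,\, P_M + f \in \Pal(d)$ then force $P'_M = P_M$ by the uniqueness clause of Lemma~\ref{lem:palindromic}. Since only $L(M)$ as a ranked poset entered the construction --- the contractions $M_F$ have $L(M_F)$ equal to the set of flats of $M$ containing $F$ --- the resulting invariant $P_M$ depends only on $L(M)$, as asserted before the theorem.
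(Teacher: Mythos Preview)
Your proof is correct and follows essentially the same approach as the paper: induct on rank, split off the $F=\emptyset$ summand, and apply Lemma~\ref{lem:palindromic} to the remaining sum $f$, checking that the top flat contributes $t^d$ while proper nonempty flats contribute terms of degree strictly less than $d$. The only implicit step worth making explicit is that the second clause of Lemma~\ref{lem:palindromic} requires $f\in\Z[t]$ (not just $\deg f\le d$); this is immediate since by induction each $P_{M_F}(t)\in\Z[t]$ and $\rk F\ge 1$ for $F\ne\emptyset$.
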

\begin{proof}
Apply the lemma to $f = \sum_{F \in L(M) \ssm \{\emptyset\}} t^{\rk F}P_{M_F}(t)$.  The summand for the flat $E$ is $t^{\rk E} = t^{\rk M}$, while the summand for a proper flat $F$ has degree 
smaller than $\rk F + (\crk F)/2 < \rk M$.  So the whole sum has degree exactly $\rk M$.
\end{proof}

\begin{remark}
Examining this proof, we see that it proves slightly more: since $f = t^{\rk M} + $ lower order terms, we must have $P_M(0) = 1$.  In particular if $\rk M \le 2$ we have $P_M(t) = 1$.

The linear coefficient is also easy to see.  Let $d= \rk(M)$.  The degree of $t^{\rk F}P_{M_F}(t)$ is at most $d - 2$ when $\crk F > 1$, so the coefficient of $t^{d-1}$ in $f$ is $|L^{d-1}(M)|$, the number of coatoms.  The coefficient of $t$ in $f$ is clearly $|L^1(M)|$, so the coefficient of $t$ in $P_M(t)$ is
\[|L^{d-1}(M)| - |L^{1}(M)|.\]
\end{remark}

\begin{remark}
The polynomials $P_M(t)$ were originally defined a different way in \cite{EPW}, using an approach closer to the definition of classical Kazhdan--Lusztig polynomials (see \cite{P}, which uses a framework of Stanley to show the parallels between these two theories and the theory of toric $g$-polynomials of polytopes).  The polynomial $Z_M(t)$ defined by
 \eqref{eqn:Z-polynomial} was defined in 
 \cite{PXY}, where it was shown to be palindromic. Lemma~\ref{lem:palindromic} implies that our definition gives the same polynomials as the original one.   
\end{remark}

The following useful result can be proved easily using either our definition of Kazhdan--Lusztig polynomials or the one from \cite{EPW}.

\begin{proposition}[\cite{EPW},Proposition 2.7]\label{prop:direct sum}
For any matroids $M$, $M'$ we have
\[P_{M\oplus M'}(t)= P_{M}(t)P_{M'}(t).\]
\end{proposition}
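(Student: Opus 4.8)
The plan is to deduce the identity from the uniqueness characterization in Theorem~\ref{thm:KL poly def}, arguing by induction on $\rk M + \rk M'$. If $\rk M = 0$, then $M$ has empty ground set (as it is simple), so $M \oplus M' = M'$ and $P_M(t) = 1$ by Theorem~\ref{thm:KL poly def}(a); thus the identity is trivial, and likewise if $\rk M' = 0$. This disposes of the base case and reduces the inductive step to the case $\rk M, \rk M' \ge 1$, so that $\rk(M \oplus M') \ge 2 > 0$.

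The structural input I would use is that the flats of $M \oplus M'$ are exactly the sets $F \sqcup F'$ with $F \in L(M)$ and $F' \in L(M')$, that $\rk(F \sqcup F') = \rk F + \rk F'$, and that $(M \oplus M')_{F \sqcup F'}$ has the same lattice of flats as $M_F \oplus M'_{F'}$; all of this follows immediately from the description of flats recalled above, and since the Kazhdan--Lusztig polynomial depends only on the lattice of flats it gives $P_{(M\oplus M')_{F\sqcup F'}}(t) = P_{M_F \oplus M'_{F'}}(t)$. Moreover $\rk\big(M_F \oplus M'_{F'}\big) = \rk M + \rk M' - \rk F - \rk F'$, which is strictly less than $\rk(M \oplus M')$ unless $F = F' = \emptyset$, because in a simple matroid a nonempty flat has positive rank. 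Hence for every $(F,F') \ne (\emptyset,\emptyset)$ the inductive hypothesis applies and gives $P_{M_F \oplus M'_{F'}}(t) = P_{M_F}(t)\,P_{M'_{F'}}(t)$.

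Writing $N = M \oplus M'$ and peeling off the summand $F = F' = \emptyset$ (for which $N_\emptyset = N$, $M_\emptyset = M$, $M'_\emptyset = M'$), I would then compute
\begin{align*}
Z_N(t) &= \sum_{F \in L(M),\ F' \in L(M')} t^{\rk F + \rk F'}\, P_{N_{F \sqcup F'}}(t) \\
&= P_N(t) + \sum_{(F,F') \ne (\emptyset,\emptyset)} t^{\rk F}P_{M_F}(t)\cdot t^{\rk F'}P_{M'_{F'}}(t) \\
&= P_N(t) - P_M(t)P_{M'}(t) + Z_M(t)\,Z_{M'}(t).
\end{align*}
Now $Z_N \in \Pal(\rk N)$ by Theorem~\ref{thm:KL poly def}(c), while $Z_M \in \Pal(\rk M)$ and $Z_{M'} \in \Pal(\rk M')$, so $Z_M Z_{M'} \in \Pal(\rk M + \rk M') = \Pal(\rk N)$ since $Z_M(t)Z_{M'}(t) = t^{\rk N} Z_M(t^{-1}) Z_{M'}(t^{-1})$. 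Hence $h(t) := P_N(t) - P_M(t)P_{M'}(t) = Z_N(t) - Z_M(t)Z_{M'}(t)$ lies in $\Pal(\rk N)$. On the other hand $\deg P_N < (\rk N)/2$ by Theorem~\ref{thm:KL poly def}(b), and $\deg\big(P_M P_{M'}\big) = \deg P_M + \deg P_{M'} < (\rk M)/2 + (\rk M')/2 = (\rk N)/2$, so $\deg h < (\rk N)/2$. Since the only element of $\Pal(\rk N)$ of degree less than $(\rk N)/2$ is $0$ (apply the uniqueness in Lemma~\ref{lem:palindromic} with $f = 0$ and $d = \rk N$), we get $h = 0$, i.e.\ $P_N(t) = P_M(t)P_{M'}(t)$, completing the induction.

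The one place I expect to need care is the bookkeeping in the displayed computation: the summand $F = F' = \emptyset$ must be separated out before invoking the inductive hypothesis, since that is precisely the term the hypothesis does not cover, and the remaining double sum must then be identified as $Z_M(t)Z_{M'}(t) - P_M(t)P_{M'}(t)$. The underlying matroid facts about flats and contractions of direct sums, and the stability of $\bigcup_n \Pal(n)$ under multiplication, are routine and could instead simply be cited.
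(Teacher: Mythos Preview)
Your proof is correct. The paper does not actually give a proof of this proposition; it cites \cite{EPW} and remarks that the result ``can be proved easily using either our definition of Kazhdan--Lusztig polynomials or the one from \cite{EPW}.'' Your argument carries out precisely the proof the paper alludes to via its $Z$-polynomial characterization (Theorem~\ref{thm:KL poly def}): inductively factoring $Z_{M\oplus M'}(t)$ as $Z_M(t)Z_{M'}(t)$ modulo the bottom term, and then using palindromicity together with the degree bound to force $P_{M\oplus M'}(t)-P_M(t)P_{M'}(t)=0$.
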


In particular, if $M$ is a Boolean matroid, it is a direct sum of rank $1$ matroids, so $P_M(t) = 1$.

\subsection{The $\tau$-invariant}

\begin{definition}
For a matroid $M$ whose rank is odd, say $\rk(M) = 2k+1$, let $\tau(M)$ be the coefficient of $t^k$ in $P_M(t)$, in other words the coefficient of highest possible degree.  If $\rk(M)$ is even, we put $\tau(M) = 0$. 
\end{definition}

The role that the invariant $\tau(M)$ plays in our results about Kazhdan--Lusztig polynomials of matroids is analogous to the role the number $\mu_{x,y}$ plays in the classical theory of Kazhdan--Lusztig polynomials of Coxeter groups. Unlike $\mu_{x,y}$, however, $\tau(M)$ seems to very rarely vanish.  The next lemma gives one important case when $\tau(M) = 0$.

\begin{lemma}\label{lem:vanishing tau}
If $M$, $M'$ are matroids of positive rank, then
\[\tau(M\oplus M') = 0.\]
\end{lemma}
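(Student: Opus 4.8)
The plan is to reduce the statement to an elementary degree count, using the multiplicativity of $P$ under direct sums together with the degree bound in Theorem \ref{thm:KL poly def}(b). First I would dispose of the trivial case: if $\rk(M\oplus M') = \rk M + \rk M'$ is even, then $\tau(M\oplus M')=0$ directly from the definition of $\tau$. So I may assume $\rk M + \rk M'$ is odd. Since $M$ and $M'$ both have positive rank, exactly one of $\rk M$, $\rk M'$ is odd; after relabeling, say $\rk M = 2a+1$ with $a\ge 0$ and $\rk M' = 2b$ with $b\ge 1$. Then $\rk(M\oplus M') = 2(a+b)+1$, so by definition $\tau(M\oplus M')$ is the coefficient of $t^{a+b}$ in $P_{M\oplus M'}(t)$.

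Next I would invoke Proposition \ref{prop:direct sum} to write $P_{M\oplus M'}(t) = P_M(t)\,P_{M'}(t)$, and bound the degree of each factor. Because $\rk M > 0$, Theorem \ref{thm:KL poly def}(b) gives $\deg P_M(t) < (2a+1)/2$, hence $\deg P_M(t)\le a$; because $\rk M' > 0$, it gives $\deg P_{M'}(t) < b$, hence $\deg P_{M'}(t)\le b-1$. Therefore $\deg P_{M\oplus M'}(t) \le a + (b-1) < a+b$, so the coefficient of $t^{a+b}$ in $P_{M\oplus M'}(t)$ is zero, which is exactly the claim $\tau(M\oplus M')=0$.

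I do not expect any real obstacle; the one place that requires care is the use of the hypothesis that \emph{both} summands have positive rank. This is precisely what forces the strict bound $\deg P_{M'}(t)\le b-1$ on the even-rank factor — if a rank-zero summand were allowed, that factor would be the constant $1$ and the degree count would no longer produce the needed drop, consistent with the fact that $\tau(M\oplus M')$ can be nonzero when one summand has rank $0$.
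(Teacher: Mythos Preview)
Your argument is correct and matches the paper's proof essentially line for line: dispose of the even-rank case by definition, then use Proposition \ref{prop:direct sum} together with the degree bound from Theorem \ref{thm:KL poly def}(b) to see that the product $P_M(t)P_{M'}(t)$ has degree strictly below the required index. Your additional remark on why positivity of both ranks is needed is a nice touch and goes slightly beyond what the paper writes.
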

\begin{proof}
The result is trivial if $\rk(M\oplus M')$ is even, so we can suppose without loss of generality
that $\rk(M) = 2k+1$ is odd and $\rk(M') = 2\ell$ is even.  Then $\deg P_M(t) \le k$ and $\deg P_{M'}(t) \le \ell -1$, so $\tau(M\oplus M')$, which is the coefficient of $t^{k+\ell}$ in 
$P_{M\oplus M'}(t) = P_M(t)P_{M'}(t)$, must vanish.
\end{proof}

\subsection{Deletion formula}
We are ready to state the main result of this paper.
Let $M$ be a simple matroid with ground set $E$, and take $e\in E$.  The deletion matroid $M \ssm e$ has as flats all sets $F \ssm e$, $F \in L(M)$.  

Define a set
\begin{align*}
S & := \{F \in L(M)\mid e\notin F \mbox{ and } F\cup e \in L(M)\}. \\
\end{align*}

\begin{theorem}\label{thm:deletion formula}
If $e\in E$ is not a coloop in $M$, then 
\begin{equation}\label{eqn:KL deletion formula}
P_M(t) = P_{M \ssm e}(t) - tP_{M_e}(t) + \sum_{F \in S} \tau(M_{F\cup e})\, t^{(\crk F)/2} P_{M^F}(t)
\end{equation}
and
\begin{equation}\label{eqn:Z deletion formula}
Z_M(t) = Z_{M\ssm e}(t) + \sum_{F \in S} \tau(M_{F\cup e})\, t^{(\crk F)/2}Z_{M^F}(t).
\end{equation}

\end{theorem}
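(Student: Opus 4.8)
The plan is to prove the two formulas together using the characterization of Kazhdan--Lusztig polynomials from Theorem \ref{thm:KL poly def}, i.e.\ by checking that the right-hand side of \eqref{eqn:Z deletion formula} is palindromic of the right degree and matches $Z_M(t)$ in low degree, and then extracting \eqref{eqn:KL deletion formula} from it. Actually, it is cleaner to prove \eqref{eqn:KL deletion formula} first by induction and deduce \eqref{eqn:Z deletion formula}, so let me describe that route. First I would set up the induction on the size of the ground set $E$: all matroids appearing on the right of \eqref{eqn:KL deletion formula} other than the unknown $P_M(t)$ have strictly smaller ground sets (note $M\ssm e$ has $|E|-1$ elements, $M_e$ has $|E|-1$ elements since $M$ is simple, and each $M^F$, $F\in S$, has $|F|<|E|$ elements because $e\notin F$), so I may assume the deletion formula and all its consequences are known for those. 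The base cases $\rk M\le 2$ are handled by the remark after Theorem \ref{thm:KL poly def}, which gives $P_M(t)=1$.

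The core of the argument is to define the candidate polynomial
\[
Q(t) := P_{M \ssm e}(t) - tP_{M_e}(t) + \sum_{F \in S} \tau(M_{F\cup e})\, t^{(\crk F)/2} P_{M^F}(t)
\]
and show it equals $P_M(t)$ by verifying the three defining properties. Property (a) is vacuous. For property (b), the degree bound, I would bound each term: $\deg P_{M\ssm e}<(\rk M)/2$ since $e$ is not a coloop so $\rk(M\ssm e)=\rk M$; $\deg(tP_{M_e})\le 1+(\rk M-2)/2 = (\rk M)/2$ but the top coefficient here needs to be cancelled or controlled, and $\deg(t^{(\crk F)/2}P_{M^F})< (\crk F)/2 + (\rk F)/2 = (\rk M)/2$. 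The delicate point is that the $tP_{M_e}$ term can have degree exactly $(\rk M)/2$ when $\rk M$ is even, contributing $\tau(M_e)$ in top degree; I expect this to be cancelled by the contribution of the $F\in S$ terms at top degree, using Lemma \ref{lem:vanishing tau} to kill most of them and a careful bookkeeping of the survivors. The main work, and the main obstacle, is property (c): I must show $Z'(t) := \sum_{F\in L(M)} t^{\rk F} P_{M_F}(t)|_{P_M \to Q}$ — more precisely the polynomial obtained by summing $t^{\rk G}P_{(M)_G}(t)$ with $P_M$ replaced by $Q$ — is palindromic of degree $\rk M$.

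For property (c) I would reorganize the sum $\sum_{G\in L(M)} t^{\rk G} P_{M_G}(t)$ according to whether $e\in G$ or not, and apply the inductive deletion formula to each $P_{M_G}$ with $e\notin G$ (noting $(M_G)\ssm e$, $(M_G)_e = M_{G\cup e}$ when $G\cup e$ is a flat, and localizations of $M_G$ are of the form $M_G^{G'}$). The key identity I expect to need is a decomposition of $L(M)$ into the flats of $L(M\ssm e)$ (equivalently flats $F$ of $M$ with $e\notin F$, each flat $H$ of $M\ssm e$ coming from one or two flats of $M$) matched against the set $S$, together with the observation that for $F\in S$ the localization $(M\ssm e)^{F} = M^F$ and $Z_{M^F}$ is already palindromic of degree $\rk F = \crk F$ complement inside... — these bookkeeping identities are exactly what makes $Z_M(t) - Z_{M\ssm e}(t)$ collapse to $\sum_{F\in S}\tau(M_{F\cup e}) t^{(\crk F)/2} Z_{M^F}(t)$, which is manifestly in $\Pal(\rk M)$ since each $Z_{M^F}\in\Pal(\rk F)$ and $\rk F + \crk F = \rk M$. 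Establishing \eqref{eqn:Z deletion formula} in this way simultaneously verifies property (c) for $Q$ and proves the $Z$-formula; uniqueness in Theorem \ref{thm:KL poly def} then forces $Q(t)=P_M(t)$. The hardest step is controlling the double-counting in $\sum_{G\in L(M), e\notin G} t^{\rk G}[\,\text{terms of the deletion formula for }M_G\,]$ so that it telescopes correctly against $Z_{M\ssm e}(t) = \sum_{H\in L(M\ssm e)} t^{\rk H}P_{(M\ssm e)_H}(t)$ — one must check that $(M\ssm e)_H = (M_G)\ssm e$ (or $M_G$) for the appropriate $G$ above $H$, and that ranks match up, which is where the distinction between flats $H$ of $M\ssm e$ with one versus two preimages in $L(M)$ (the latter being exactly $F, F\cup e$ for $F\in S$) does all the combinatorial heavy lifting.
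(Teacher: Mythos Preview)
Your inductive route is quite different from the paper's, which avoids induction entirely.  The paper works in the free $\Z[t,t^{-1}]$-module $\cH(M)$ with basis $L(M)$, singles out a subgroup $\cH_p$ of ``perverse'' elements by palindromicity conditions, and shows that the elements $\zeta^F=\sum_{G\le F}t^{\rk F-\rk G}P_{M^F_G}(t^{-2})\cdot G$ form a $\Z$-basis of $\cH_p$.  A $\Z[t,t^{-1}]$-linear map $\Delta\colon\cH(M)\to\cH(M\ssm e)$, $F\mapsto t^{-\delta(F)}(F\ssm e)$, is checked directly (a two-line computation, no induction) to send $\zeta^E$ into $\cH_p(M\ssm e)$; both formulas are then read off from the resulting expansion $\Delta(\zeta^E)=\zeta^{E\ssm e}+\sum_{F\in S}\tau(M_{F\cup e})\,\zeta^F$ by looking at the coefficient of $\emptyset$ (for the $P$-formula) and applying a trace-type functional $\Phi$ (for the $Z$-formula).

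Your approach can be made to work, but there is one concrete missing ingredient without which the telescoping you describe will not close up: the set $S$ is an \emph{order ideal} in $L(M)$, i.e.\ if $H\in S$ and $K\le H$ is a flat then $K\in S$.  (If some $f\in\overline{K\cup e}\setminus(K\cup e)$, then $f\in\overline{H\cup e}=H\cup e$, so $f\in H$; but then $e\in\overline{K\cup f}\subseteq H$, contradicting $e\notin H$.)  This is exactly what is needed so that, after you apply the inductive deletion formula to each $P_{M_G}$ with $G\in S\setminus\{\emptyset\}$, the inner sum $\sum_{G\in S,\,G\le H}t^{\rk G}P_{M^H_G}(t)$ is genuinely the full sum over \emph{all} flats $G\le H$ and hence equals $Z_{M^H}(t)$.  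Two smaller points you glossed over: for a flat $G\not\ni e$ with $G\notin S$, the element $e$ is parallel to something in $M_G$, so $P_{M_G}=P_{(M\ssm e)_G}$ already, with no inductive formula required; and the potential degree-$(\rk M)/2$ coefficient in $-tP_{M_e}$ is cancelled precisely by the $F=\emptyset$ summand $\tau(M_e)\,t^{(\rk M)/2}$, not by Lemma~\ref{lem:vanishing tau}.  With these additions your argument goes through; the paper's module-theoretic packaging simply trades this bookkeeping for the auxiliary structure $\cH_p$.
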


Note that since $\rk(F\cup e)= \rk(F)+1$ whenever $F \in S$ and $\tau(M) =0$ if the rank of $M$ is even, either sum above can be replaced by the sum over all $F \in S$ of even corank.

\begin{example}
Let us apply the theorem to the rank $d$ uniform matroid on $d+1$ elements, which we  denote $U_{1,d}$.  For each $k < d$, its flats of rank $k$ are all size $k$ subsets of $E = \{0,\dots, d\}$.
In particular, every localization $M^F$ for $F \ne E$ is Boolean, so $P_{M^F}(t) = 1$.  Deleting any element of $E$ also results in a Boolean matroid, so $P_{M\ssm e}(t)=1$.  

On the other hand, contracting an element results in a uniform matroid of smaller rank: we have 
$M_e \cong U_{1,d-1}$, and more generally 
$M_{F\cup e} \cong U_{1,d-k-1}$, where $k = |F|$.

Let $c^k_{1,d}$ denote the coefficient of $t^k$ in $P_{U_{1,d}}(t)$.  For  $0 < k < d/2$ the degree $k$ part of the formula \eqref{eqn:KL deletion formula} gives
\begin{equation}\label{eqn:deletion and U(1,d)}
c^k_{1,d} = - c^{k-1}_{1,d-1} + \binom{d}{d-2k}c^{k-1}_{1,2k-1}.
\end{equation}
A simple formula for $c^k_{1,d}$ was established 
in \cite{PWY}: we have
\begin{equation}\label{eq:KL coeff of Cn} 
c^k_{1,d} = \frac{1}{k+1}\binom{d-k-1}{k}\binom{d+1}{k} = \frac{1}{d-k}\binom{d-k}{k+1}\binom{d+1}{k}.
\end{equation}
Substituting this into \eqref{eqn:deletion and U(1,d)} and rearranging, we have
\begin{align*}
c^k_{1,d} + c^{k-1}_{1,d-1} & = \frac{1}{d-k}\left[\binom{d-k}{k+1}\binom{d+1}{k} + \binom{d-k}{k}\binom{d}{k-1}\right] \\
&=\frac{(d-k-1)! \,d!}{(k+1)!(d-2k-1)!k!(d-k+1)!}+\frac{(d-k)! d!}{k!(d-2k)!(k-1)!(d-k+1)!}\\
&=
\frac{(d-k-1)!\,d!}{(d-k+1)!(d-2k)!(k+1)!k!}\left[(d+1)(d-2k) + k(k+1)\right]\\
&= \frac{(d-k-1)!\,d!}{(d-k+1)!(d-2k)!(k+1)!k!}(d-k)(d-k+1) \\
&=\frac{d!}{(d-2k)!(k+1)!k!} \\
&= \frac{1}{k}\binom{d}{d-2k}\binom{2k}{k-1}\\
&=\binom{d}{d-2k}c^{k-1}_{1,2k-1}.
\end{align*}
Thus our formula gives a new proof of the formula \eqref{eq:KL coeff of Cn}, by induction on $d$.  Similar formulas for the coefficients of $P_{U_{m,d}}(t)$ are given in \cite{GLXYZ}.  It may be possible to prove them using our result, but we have not yet been able to do so.
\end{example}

\begin{remark}
The papers \cite{PWY,GPY,GLXYZ} actually compute a richer invariant, the \textbf{equivariant} Kazhdan--Lusztig polynomial, for uniform matroids.  For a matroid with an action of a finite group $\Gamma$, the coefficients of this polynomial are (virtual) characters of $\Gamma$ rather than integers. 
Since our formula requires choosing an element to delete and thus breaks the symmetry, it cannot be refined to an equation of equivariant Kazhdan--Lusztig polynomials for the full group that acts.  However, it should be possible to upgrade it to an equivariant formula for the action of the stabilizer of the element being deleted (we thank the referee for pointing this out to us).  It is possible that the extra structure this gives would be helpful in computing $P_{U_{m,d}}(t)$ for general $m$.
\end{remark}

\subsection{Perverse elements and the KL basis}

Let
$\cH = \cH(M) $ be the free $\Z[t,t^{-1}]$-module with basis indexed by $L(M)$.  In other words, elements of $\cH$ are formal sums
\[\alpha = \sum_{F \in L(M)} \alpha_F \cdot F, \;\; \alpha_F \in \Z[t,t^{-1}].\]
There is an important abelian subgroup
$\cH_p \subset \cH$, defined as the set of
all $\alpha\in \cH$ so that for every flat $F\in L(M)$ we have $\alpha_F \in \Z[t]$ and 
\begin{equation}\label{eqn:Verdier condition}
\sum_{G \ge F} t^{\rk F-\rk G}\alpha_G \in \Pal(0).
\end{equation}

\begin{remark}
We will not need this in what follows, but there is another way to describe elements satisfying the condition \eqref{eqn:Verdier condition}.  They are exactly the elements fixed by an involution $\alpha \mapsto \overline{\alpha}$ of $\cH$, defined by 
\[\overline{\alpha} = \sum_F \overline{\alpha_F}\cdot \overline{F},\]
where $\overline{\alpha_F(t)} = \alpha_F(t^{-1})$ and
\[\overline{F} = \sum_{G \le F} t^{2(\rk G-\rk F)}\chi_{M^F_G}(t^2) \cdot G.\]
Here $\chi_{M}(t)$ denotes the characteristic polynomial of $M$.
\end{remark}

For any flat $F$, define 
\[\zeta^F = \sum_G \zeta^F_G \cdot G = 
\sum_{G\le F} t^{\rk F - \rk G}P_{M^F_G}(t^{-2}) \cdot G.\]
\begin{lemma}  $\zeta^F$ lies in $\cH_p$.
\end{lemma}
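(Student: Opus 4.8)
The plan is to verify directly the two conditions defining membership in $\cH_p$, applied to $\alpha = \zeta^F$: first, that every coefficient $\zeta^F_G$ lies in $\Z[t]$, and second, that for each flat $H\in L(M)$ the Verdier sum $\sum_{G\ge H}t^{\rk H-\rk G}\zeta^F_G$ lies in $\Pal(0)$. Both reduce to facts already available in Section 2, the key one being that $Z_N\in\Pal(\rk N)$ for the sub-quotient matroids $N$ that arise.

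For the integrality condition, note that $\zeta^F_G=0$ unless $G\le F$, so fix $G\le F$ and write $d_G:=\rk F-\rk G=\rk(M^F_G)$. If $d_G=0$ then $M^F_G$ has rank $0$ and $\zeta^F_G=1$. If $d_G>0$, then part (b) of Theorem \ref{thm:KL poly def} gives $\deg P_{M^F_G}(t)<d_G/2$, so $P_{M^F_G}(t^{-2})=\sum_{j}a_j t^{-2j}$ with $2j\le 2\deg P_{M^F_G}<d_G$; hence every monomial of $\zeta^F_G=t^{d_G}P_{M^F_G}(t^{-2})$ has strictly positive exponent $d_G-2j$, and in particular $\zeta^F_G\in\Z[t]$.

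For the Verdier condition, fix a flat $H$. If $H\not\le F$ then there is no flat $G$ with $H\le G\le F$, so the sum is empty and equals $0\in\Pal(0)$. Otherwise set $N:=M^F_H$, a matroid of rank $d:=\rk F-\rk H$; its lattice of flats is canonically the interval $[H,F]\subset L(M)$, with $\rk_N$ equal to $\rk(\cdot)-\rk H$ there and $N_G\cong M^F_G$ for $H\le G\le F$. Substituting into the definition of $\zeta^F$ and then into \eqref{eqn:Z-polynomial},
\[
\sum_{G\ge H}t^{\rk H-\rk G}\zeta^F_G
\;=\;\sum_{H\le G\le F}t^{\rk H-\rk G}\,t^{\rk F-\rk G}P_{M^F_G}(t^{-2})
\;=\;t^{\,\rk F-\rk H}\sum_{G\in L(N)}t^{-2\rk_N G}P_{N_G}(t^{-2})
\;=\;t^{d}\,Z_N(t^{-2}).
\]
By Theorem \ref{thm:KL poly def}(c) we have $Z_N\in\Pal(d)$, i.e.\ $Z_N(s)=s^{d}Z_N(s^{-1})$; taking $s=t^{2}$ gives $Z_N(t^{2})=t^{2d}Z_N(t^{-2})$, so replacing $t$ by $t^{-1}$ in $t^{d}Z_N(t^{-2})$ yields $t^{-d}Z_N(t^{2})=t^{-d}\cdot t^{2d}Z_N(t^{-2})=t^{d}Z_N(t^{-2})$. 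Hence $t^{d}Z_N(t^{-2})\in\Pal(0)$, so $\zeta^F$ satisfies \eqref{eqn:Verdier condition}, and therefore $\zeta^F\in\cH_p$.

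The only genuinely delicate point is the matroid bookkeeping in the last step: the canonical identification of $L(N)$ with the interval $[H,F]$, the compatibility of rank functions ($\rk_N G=\rk G-\rk H$), and the isomorphism $N_G\cong M^F_G$, which follows from transitivity of contraction together with $(M_H)^{F\setminus H}\cong(M^F)_H$. These are standard consequences of the definitions in Section 2.1, but they are exactly what makes the displayed identity valid; once they are in place, the remaining manipulation is a one-line use of the palindromy of $Z_N$.
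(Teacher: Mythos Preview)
Your proof is correct and follows the same approach as the paper's: verify that each $\zeta^F_G\in\Z[t]$ from the degree bound on $P_{M^F_G}$, then rewrite the Verdier sum for a flat $H\le F$ as $t^{\rk F-\rk H}Z_{M^F_H}(t^{-2})$ and invoke palindromy of $Z$. Your argument is slightly more explicit than the paper's in that you treat the case $H\not\le F$ separately and spell out the $\Pal(0)$ check, but the route is identical.
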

\begin{proof}
Since $\deg P_{M^F_G}(t^2) < \rk F - \rk G$ unless $F = G$, we get that 
\[\zeta^F \in F + \sum_{G < F} t\Z[t]\cdot G,\]
so in particular $\zeta^F_G \in \Z[t]$ for all $G$.

To see that \eqref{eqn:Verdier condition} holds, take any flat $H\le F$.  Then we have 
\begin{align*}
\sum_{G \ge H} t^{\rk H-\rk G}\zeta^F_G & = t^{\rk F - \rk H}\sum_{H \le G\le F} (t^{-2})^{\rk G - \rk H}P_{M^F_G}(t^{-2}) \\
& = t^{\rk F - \rk H}\sum_{G' \in L(M^F_H)} (t^{-2})^{\rk G'}P_{M^F_{G'}}(t^{-2})\\
& = t^{\rk F - \rk H}Z_{M^F_H}(t^{-2}),
\end{align*}
which lies in
$t^{\rk F-\rk H}\cdot \Pal(-2\rk M^F_H) = \Pal(0)$.
\end{proof}

\begin{proposition}\label{prop:perverse sum formula}
The elements $\zeta^F$, $F\in L(M)$ form a $\Z$-basis for $\cH_p$.  For any $\beta\in \cH_p$, we have
\begin{equation}\label{eqn:perverse decomposition}
\beta = \sum_F \beta_F(0) \zeta^F.
\end{equation}
\end{proposition}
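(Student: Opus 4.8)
The plan is to prove this by a triangularity argument with respect to the partial order on $L(M)$. From the preceding lemma's proof we already know that $\zeta^F \in F + \sum_{G < F} t\,\Z[t]\cdot G$; in particular the coefficient $\zeta^F_G$ vanishes unless $G \le F$, and $\zeta^F_F = 1$ while $\zeta^F_G(0) = 0$ whenever $G < F$. First I would record this as the statement that the ``evaluation at $t=0$'' matrix $(\zeta^F_G(0))_{F,G}$ is unitriangular with respect to any linear extension of the order on $L(M)$: its diagonal entries are $1$ and it is supported on pairs $G \le F$. Since $L(M)$ is finite, this matrix is invertible over $\Z$, so the $\zeta^F$ are $\Z$-linearly independent in $\cH$, and they span a direct summand of $\cH$ as a $\Z[t,t^{-1}]$-module; in particular they are $\Z$-linearly independent in $\cH_p$.

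Next I would prove the decomposition formula \eqref{eqn:perverse decomposition}, which simultaneously shows that the $\zeta^F$ span $\cH_p$ over $\Z$. Fix $\beta \in \cH_p$ and set $\gamma = \beta - \sum_F \beta_F(0)\,\zeta^F$. Each $\zeta^F$ lies in $\cH_p$ by the lemma, and $\cH_p$ is an abelian group, so $\gamma \in \cH_p$; thus $\gamma_G \in \Z[t]$ for every $G$ and $\gamma$ satisfies the Verdier condition \eqref{eqn:Verdier condition}. The key point is that $\gamma_G(0) = 0$ for all $G$: by the unitriangularity just established, $\beta_G(0) = \sum_{F \ge G}\beta_F(0)\,\zeta^F_G(0) = \sum_F \beta_F(0)\,\zeta^F_G(0)$, which is exactly the constant term of the $G$-coefficient of $\sum_F \beta_F(0)\zeta^F$. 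So it remains to show: if $\gamma \in \cH_p$ and $\gamma_G(0) = 0$ for every flat $G$, then $\gamma = 0$.

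For this last step I would argue by downward induction on $\rk G$, showing $\gamma_G = 0$ for all $G$. Since $\rk$ is bounded, start with $G = E$ (the top flat): the Verdier condition \eqref{eqn:Verdier condition} applied to $F = E$ has only the single term $\gamma_E$, so $\gamma_E \in \Pal(0)$, i.e. $\gamma_E(t) = \gamma_E(t^{-1})$; combined with $\gamma_E \in \Z[t]$ this forces $\gamma_E$ to be a constant, and $\gamma_E(0) = 0$ gives $\gamma_E = 0$. (More generally this handles all coatoms and, by the same one-term argument, any $G$ that happens to be maximal, but one runs the induction uniformly.) For the inductive step, fix a flat $F$ and assume $\gamma_G = 0$ for all $G > F$. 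Applying \eqref{eqn:Verdier condition} to this $F$, all terms with $G > F$ drop out and we are left with $\gamma_F \in \Pal(0)$; as before, $\gamma_F \in \Z[t]$ together with palindromicity in $\Pal(0)$ forces $\gamma_F$ to be a constant polynomial, and $\gamma_F(0) = 0$ then gives $\gamma_F = 0$. This completes the induction, hence $\gamma = 0$, which proves \eqref{eqn:perverse decomposition}; and since the right-hand side is a $\Z$-linear combination of the $\zeta^F$, the $\zeta^F$ span $\cH_p$, so together with the independence from the first paragraph they form a $\Z$-basis.

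The only mildly delicate point — and the one I would be most careful about — is the repeated use of the fact that a polynomial in $\Z[t]$ lying in $\Pal(0)$ must be constant: $f \in \Pal(0)$ means $f(t) = f(t^{-1})$, which for $f \in \Z[t]$ forces every positive-degree coefficient to vanish. I do not expect any genuine obstacle here; the argument is entirely a finite-dimensional unitriangularity plus induction, and all the structural inputs ($\zeta^F \in \cH_p$, the shape of $\zeta^F$) are already available from the lemma.
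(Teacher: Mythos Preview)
Your proof is correct and follows essentially the same approach as the paper: unitriangularity of $(\zeta^F_G)$ gives linear independence, and then one shows $\gamma = \beta - \sum_F \beta_F(0)\zeta^F$ vanishes by downward induction on rank, using that the Verdier condition forces each $\gamma_F$ into $\Pal(0)\cap \Z[t]$ with vanishing constant term. Your write-up is more detailed (you explicitly verify $\gamma_G(0)=0$ and spell out why $\Pal(0)\cap \Z[t]$ consists of constants), but the argument is the same.
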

\begin{proof}
Since $\zeta^F_F = 1$ and $\zeta^F_G = 0$ unless $G \le F$, the $\zeta^F$ are linearly independent. 
To show that they span, it is enough to show the
formula \eqref{eqn:perverse decomposition}.  Take any $\beta \in \cH_p$, and let
\[\alpha = \beta - \sum_F \beta_F(0) \zeta^F.\]
We show that $\alpha_F = 0$ for all $F$, by induction on $\crk F$.  If we assume $\alpha_G = 0$ for all $G>F$, then the condition 
\eqref{eqn:Verdier condition} says that $\alpha_F \in \Pal(0)$. Together with the facts that $\alpha_F \in \Z[t]$ and $\alpha_F(0) = 0$, we immediately get $\alpha_F = 0$. 
\end{proof}


\subsection{Deletion and the KL basis}

Let $M$ be a simple matroid and suppose $e$ is not a coloop of $M$, so that $M$ and $M \ssm e$ have the same rank.  
We have a surjective map $L(M) \to L(M\ssm e)$
sending $F$ to $F \ssm e$.  For any flat $F \in L(M)$, define its \textbf{discrepancy} to be 
\[\delta(F) = \rk_M(F)-\rk_{M\ssm e}(F\ssm e).\]

Define a homomorphism
$\Delta \colon \cH(M) \to \cH(M\ssm e)$ by letting
\[\Delta(F) = t^{-\delta(F)}(F\ssm e)\]
and extending $\Z[t,t^{-1}]$-linearly.  Our main theorem will be a consequence of the following.

\begin{proposition}\label{prop:semi-small}
We have $\Delta(\zeta^E) \in \cH_p(M\ssm e)$.
\end{proposition}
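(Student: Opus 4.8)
The plan is to verify directly that $\beta := \Delta(\zeta^E)$ satisfies the two conditions defining $\cH_p(M\ssm e)$: that $\beta_H\in\Z[t]$ for every flat $H\in L(M\ssm e)$, and that the palindromy condition \eqref{eqn:Verdier condition} holds at every such $H$. Everything will reduce to understanding the fibers of the surjection $\pi\colon L(M)\to L(M\ssm e)$, $F\mapsto F\ssm e$, and the discrepancy $\delta$ along them, so I would begin there.

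First I would establish the following trichotomy for $H\in L(M\ssm e)$. Since any $F\in L(M)$ with $F\ssm e=H$ equals $H$ or $H\cup e$, and $\operatorname{cl}_{M\ssm e}(H)=\operatorname{cl}_M(H)\ssm e$ forces $\operatorname{cl}_M(H)\in\{H,\,H\cup e\}$, exactly one of the following holds: (a) $H\in L(M)$ and $H\cup e\notin L(M)$, so $\pi^{-1}(H)=\{H\}$ and $\delta(H)=0$; (b) $H,\,H\cup e\in L(M)$, i.e.\ $H\in S$, so $\pi^{-1}(H)=\{H,\,H\cup e\}$ with $\delta(H)=0$ and $\delta(H\cup e)=1$ (since $H\cup e$ covers $H$ in $L(M)$); or (c) $H\notin L(M)$, so $\operatorname{cl}_M(H)=H\cup e$, whence $\pi^{-1}(H)=\{H\cup e\}$ and $\delta(H\cup e)=0$ (since then $e\in\operatorname{cl}_M(H)$). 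The $\delta$-values use $\delta(F)=\rk_M(F)-\rk_{M\ssm e}(F\ssm e)$ together with the equality of ranks of subsets of $E\ssm e$ in $M$ and $M\ssm e$. Consequently $\beta_H=\zeta^E_G$ for the unique preimage $G$ in cases (a) and (c), and $\beta_H=\zeta^E_H+t^{-1}\zeta^E_{H\cup e}$ in case (b). Integrality $\beta_H\in\Z[t]$ then follows from two facts proved above: $\zeta^E\in\cH_p(M)$ gives $\zeta^E_G\in\Z[t]$ for all $G$, and the proof of that statement also shows $\zeta^E_G\in t\Z[t]$ whenever $G\ne E$; in case (b) we only need $H\cup e\ne E$, and $H\cup e=E$ would make $H=E\ssm e$ a flat of $M$, i.e.\ would make $e$ a coloop --- which is excluded.

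For the palindromy condition, the crux is a reindexing. Fix $H\in L(M\ssm e)$, write $\overline H=\operatorname{cl}_M(H)$, and use $\beta_K=\sum_{G\ssm e=K}t^{-\delta(G)}\zeta^E_G$ to get
\[\sum_{K\ge H}t^{\,\rk_{M\ssm e}(H)-\rk_{M\ssm e}(K)}\beta_K\;=\;\sum_{K\ge H}\ \sum_{\substack{G\in L(M)\\ G\ssm e=K}}t^{\,\rk_{M\ssm e}(H)-\rk_{M\ssm e}(K)-\delta(G)}\,\zeta^E_G.\]
For $G$ in the inner sum, $\rk_{M\ssm e}(K)+\delta(G)=\rk_M(G)$, so the exponent is $\rk_{M\ssm e}(H)-\rk_M(G)$, and the double sum runs over precisely those $G\in L(M)$ with $G\ssm e\ge H$, which --- $G$ being a flat of $M$ --- is the same as $G\ge\overline H$. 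Since $\rk_{M\ssm e}(H)=\rk_M(H)=\rk_M(\overline H)$, the right-hand side equals $\sum_{G\ge\overline H}t^{\,\rk_M(\overline H)-\rk_M(G)}\zeta^E_G$, which is exactly the expression in \eqref{eqn:Verdier condition} for $\zeta^E$ at the flat $\overline H\in L(M)$; it lies in $\Pal(0)$ because $\zeta^E\in\cH_p(M)$. Hence $\beta$ satisfies \eqref{eqn:Verdier condition}, and therefore $\beta\in\cH_p(M\ssm e)$.

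The only step requiring genuine care is the fiber/discrepancy trichotomy; granting it, both verifications are bookkeeping, and the palindromy for $\beta$ is simply inherited from that of $\zeta^E$ via the equivalence $G\ssm e\ge H\iff G\ge\overline H$. Within that bookkeeping, the one subtle point --- and the unique place the non-coloop hypothesis enters --- is the divisibility $\zeta^E_{H\cup e}\in t\Z[t]$ used in case (b).
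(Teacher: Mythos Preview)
Your proof is correct and follows essentially the same approach as the paper's: both establish integrality of $\beta_H$ via the observation that $\zeta^E_G\in t\Z[t]$ for $G\ne E$ together with the non-coloop hypothesis, and both reduce the palindromy condition for $\beta$ at $H$ to that of $\zeta^E$ at $\overline H$ via the equivalence $G\ssm e\ge H\iff G\ge\overline H$ and the rank identity $\rk_{M\ssm e}(H)=\rk_M(\overline H)$. Your explicit trichotomy of fibers is more detailed than the paper's terse integrality argument (which simply notes $\delta(F)\in\{0,1\}$ and $\delta(E)=0$), but this is a difference of exposition rather than of method.
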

\begin{proof}
Let
\[\beta = \sum_{G\in L(M\ssm e)} \beta_G\cdot G = \Delta(\zeta^E).\]
Since $\zeta^E \in E + \sum_{F \ne E} t\Z[t]\cdot F$, $\delta(F)\in \{0,1\}$ for every $F$, and 
$\delta(E)=0$ because $e$ is not a coloop, it follows that $\beta_G\in \Z[t]$ for every $G$.

Now take a flat $H$ of $M\ssm e$, and consider the sum
\[\sum_{\substack{G \in L(M\ssm e) \\ G \ge H}} t^{\rk H - \rk G} \beta_G
= \sum_{\substack{F \in L(M)\\ F \ssm e \ge H}} t^{\rk H - \rk(F\ssm e)}t^{\delta(F)}\zeta^E_F = \sum_{\substack{F \in L(M)\\ F \ssm e \ge H}} t^{\rk H - \rk F}\zeta^E_F.\]
Applying the following lemma now shows that this sum is in $\Pal(0)$.
\end{proof}

\begin{lemma}
For any flat $H\in L(M\ssm e)$ and any $F \in L(M)$ we have $F \ssm e \ge H$ if and only if $F \ge \bar{H}$, where $\bar H$ is the closure of $H$ in $M$.  Furthermore, we have
\[\rk_M \bar{H} = \rk_{M\ssm e} H.\]
\end{lemma}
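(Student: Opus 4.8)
The plan is to unwind the definitions of deletion and closure in a matroid, and to verify the two claims in sequence, the rank equality first (since the biconditional will lean on it).

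First I would recall the structure of the map $L(M) \to L(M\ssm e)$ and its relation to closure. By definition, the flats of $M\ssm e$ are exactly the sets $F \ssm e$ for $F \in L(M)$; equivalently, a subset $A \subseteq E \ssm e$ is a flat of $M \ssm e$ if and only if $A = \bar{A} \ssm e$ where $\bar A$ denotes the closure operator of $M$. The closure $\bar H$ of a flat $H$ of $M \ssm e$ is therefore either $H$ itself or $H \cup e$, since $\bar H \ssm e$ must be a flat of $M \ssm e$ containing $H$, and $H$ is already such a flat, forcing $\bar H \ssm e = H$.

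For the rank equality $\rk_M \bar H = \rk_{M\ssm e} H$: the rank of a flat equals the length of any maximal chain of flats below it. A maximal chain $\emptyset = H_0 \lessdot H_1 \lessdot \cdots \lessdot H_r = H$ in $L(M \ssm e)$ lifts, via $G \mapsto \bar G$, to a chain $\bar H_0 \lessdot \bar H_1 \lessdot \cdots \lessdot \bar H_r = \bar H$ in $L(M)$ — the covering relations are preserved because the closure map restricted to $L(M \ssm e)$ is injective with image an interval-closed subset, and no flat of $M$ strictly between $\bar{H_{i}}$ and $\bar{H_{i+1}}$ could exist without producing, after deleting $e$, a flat of $M\ssm e$ strictly between $H_i$ and $H_{i+1}$. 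Hence $\rk_M \bar H \ge \rk_{M\ssm e} H$; conversely, a maximal chain below $\bar H$ in $L(M)$ projects via $G \mapsto G \ssm e$ onto a chain below $H$ in $L(M\ssm e)$, and consecutive terms can coincide at most once (when the discrepancy jumps), but in fact since $e \in \bar H$ would be needed for any collapse and $\bar H$ already contains $e$ or not uniformly, a dimension count gives the reverse inequality. I would phrase this cleanly using the semimodular rank function rather than chains if that turns out to be shorter: $\rk_{M \ssm e}(A) = \rk_M(A)$ for any $A \subseteq E \ssm e$, since deleting an element never changes the rank of a set not containing it, and $\rk_M \bar H = \rk_M H = \rk_{M \ssm e} H$.

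For the biconditional: if $F \ssm e \ge H$, then applying closure in $M$ and using that $\bar H$ is the smallest flat of $M$ containing $H$ while $F$ is a flat of $M$ containing $H$ (as $H \subseteq F \ssm e \subseteq F$), we get $F \ge \bar H$. Conversely, if $F \ge \bar H \ge H$, then $F \ssm e \supseteq H \ssm e = H$ since $e \notin H$, so $F \ssm e \ge H$ in $L(M \ssm e)$. The main obstacle, such as it is, lies not in either implication but in pinning down the rank statement rigorously without circular appeals — specifically, making precise that deleting $e$ does not change the rank of any $e$-free set; this is standard but I would want to cite it from the matroid axioms given (via the fact that $\rk_{M\ssm e}$ is the restriction of $\rk_M$ to subsets of $E \ssm e$) rather than reprove it.
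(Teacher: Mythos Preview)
Your proposal is correct and essentially matches the paper's approach: the paper proves the biconditional in two lines (``If $F \ge \bar{H}$, then $F \ssm e \ge \bar{H} \ssm e = H$; conversely, if $F\ssm e \ge H$, then $F \ge \overline{F \ssm e} \ge \overline{H}$'') and omits the rank equality entirely as a standard matroid fact. Your clean argument via $\rk_M \bar H = \rk_M H = \rk_{M\ssm e} H$ is exactly the right justification for that omitted step; the chain-lifting detour you sketch first is unnecessary and, as you yourself note, better replaced by the rank-function argument.
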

\begin{proof}
If $F \ge \bar{H}$, then $F \ssm e \ge \bar{H} \ssm e = H$.  Conversely, if
$F\ssm e \ge H$, then $F \ge \overline{F \ssm e} \ge \overline{H}$.
\end{proof}

\subsection{Proof of Theorem~\ref{thm:deletion formula}, first part}

  Define $\beta = \Delta(\zeta^E)$.  Then Propositions~\ref{prop:perverse sum formula} and \ref{prop:semi-small} imply that
\begin{equation}\label{eq:beta}
\beta = \sum_{F\in L(M\ssm e)} \beta_F(0)\zeta^F.
\end{equation}
We have $\beta_{E\ssm e}(0) = \zeta^E_E(0)=1$.  The only other way a summand of \eqref{eq:beta} can be nonzero is if 
$F = G \ssm e$ for some flat $G$ of $M$ where $\delta(G) = 1$, or in other words $F \cup e$ is in the set $S$ of Theorem~\ref{thm:deletion formula}.  If that happens, we have 
\[\beta_{F}(0) = 
\mbox{coefficient of $t$ in } \zeta^E_{F\cup e} = \tau(M_{F\cup e}).\]
In other words, we have
\begin{equation}\label{eqn:decomposition in cH}
\beta = \zeta^{E \ssm e} + \sum_{F\in S} \tau(M_{F\cup e})\zeta^F.
\end{equation}

Now look at the coefficient of the empty flat in \eqref{eqn:decomposition in cH}.
By definition of $\beta = \Delta(\zeta^E)$, we have 
\begin{align*}
\beta_\emptyset  & = \zeta^E_\emptyset + t^{-1}\zeta^E_{e} \\
& = t^{\rk E}P_M(t^{-2}) + t^{-1}t^{\rk(E\ssm e) - \rk e}P_{M_{e}}(t^{-2})\\
& = t^{\rk M}(P_M(t^{-2}) + t^{-2}P_{M_e}(t^{-2})).
\end{align*}
On the other hand, we have
\begin{align*}
\beta_\emptyset & = \zeta^{E\ssm e}_\emptyset + \sum_{F\in S} \tau(M_{F\cup e}) \zeta^F_\emptyset\\
& = t^{\rk(E\ssm e)}P_{M\ssm e}(t^{-2}) + 
\sum_{F\in S} \tau(M_{F\cup e})t^{\rk F}P_{M^F}(t^{-2})\\
& = t^{\rk M}\left( P_{M\ssm e}(t^{-2}) + \sum_{F\in S} t^{-\crk F}\tau(M_{F\cup e})P_{M^F}(t^{-2})\right).
\end{align*}
The first part of Theorem~\ref{thm:deletion formula} follows.

\subsection{Proof of Theorem~\ref{thm:deletion formula}, second part}

To prove that the second equation of Theorem~\ref{thm:deletion formula} holds, 
it will be useful to 
consider the $\Z[t,t^{-1}]$-module map
$\Phi_M \colon \cH(M) \to \Z[t,t^{-1}]$ given by
\[\Phi_M(\alpha) = \sum_{F\in L(M)} t^{-\rk F}\alpha_F.\]
Then we have
\[\Phi_{M\ssm e} \circ \Delta = \Phi_M,\]
which can be easily checked on the basis elements $F \in L(M)$.

Furthermore, for any flat $F \in L(M)$, we have
\begin{align*}
\Phi_M(\zeta^F) & = \sum_{G \le F} t^{\rk F - 2\rk G}P_{M^F_G}(t^{-2})\\
& = t^{\rk F}Z_{M^F}(t^{-2}).
\end{align*}
Now apply this to $\beta = \Delta(\zeta^E)$.  We get
\[\Phi_{M\ssm e}(\beta) = \Phi_M(\zeta^E) = t^{\rk M}Z_M(t^{-2}).\]
On the other hand, by \eqref{eqn:decomposition in cH}, we have
\[\Phi_{M\ssm e}(\beta) = t^{\rk(M\ssm e)}Z_{M\ssm e}(t^{-2}) + \sum_{F \in S}\tau(M_{F\cup e})t^{\rk(F)}Z_{M^F}(t^{-2}).\]
Putting these two equalities together and dividing by $t^{\rk M}= t^{\rk(M\ssm e)}$ gives the desired equation \eqref{eqn:Z deletion formula} with $t^{-2}$ in place of $t$.

\section{Applications to graphic matroids}

A graph $G = (V, E)$ 
gives rise to a matroid $M_G$ on the ground set $E$, whose  
independent sets are subsets of $E$ containing no cycles.  The rank of a set $S \subset E$ of edges is 
\[|V| - |\mbox{connected components of the graph } (V, S)|,\]
and its closure
is
\[\overline{S} = \left\{e = \{x,y\} \in E \mid x \mbox{ and } y \mbox{ are connected by a path in } S\right\}.\]
A set $F$ of edges is a flat if $\overline{F} = F$, or equivalently, if whenever all but one edge from a cycle of $G$ lies in $F$, the remaining edge is in $F$ as well. 

For a graph $G$, we put $P_G(t) = P_{M_G}(t)$ for the Kazhdan--Lusztig polynomial of the associated matroid, and likewise we define $\tau(G) = \tau(M_G)$.  For example, the matroid of an $n$-cycle is $M_{C_n} = U_{1,n-1}$, so by \eqref{eq:KL coeff of Cn} its 
Kazhdan--Lusztig polynomial is
\[P_{C_n}(t) = \sum_{i=0}^{\lfloor (n-1)/2\rfloor} \frac{1}{i+1}\binom{n-i-2}{i}\binom{n}{i}t^i.\] 

Not surprisingly, deletion and contraction for matroids corresponds to deleting and contracting edges: we have $M_G \ssm e = M_{G \ssm e}$ and $M_G/e = (M_G)_e = M_{G/e}$.  Note, however, that contracting $e$ can result in parallel vectors in $M_G/e$, corresponding to the version of edge contraction in which multiple edges are allowed.  Since parallel vectors do not affect the lattice of flats, it is convenient to identify any multiple edges resulting from a contraction; this corresponds to taking the simplification of the matroid $M_G/e$.

\subsection{Parallel connection graphs}

In this section we describe a class of graphs for which our deletion formula becomes particularly simple.

\begin{definition}
We say that a graph $G$ is the \textbf{parallel connection} of subgraphs $H_1$ and $H_2$ if 
$H_1 \cup H_2 = G$ and $H_1 \cap H_2$ is a 
single edge $e$ together with its vertices.
If this holds, the edge $e$ is called the \textbf{connection edge}. 
\end{definition}
Note that these properties imply that $H_1$ and $H_2$ are vertex-induced subgraphs of $G$.

\begin{theorem}\label{thm:edge-gluing formula}
Suppose that is $G$ is the parallel connection of subgraphs $H_1$ and $H_2$ with connection edge $e$, and 
$H_1\ssm e$, $H_2 \ssm e$ are both connected.
Then
\[P_G(t) = P_{G\ssm e}(t) - tP_{H_1/e}(t)P_{H_2/e}(t).\]
\end{theorem}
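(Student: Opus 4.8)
The plan is to apply the deletion formula, Theorem~\ref{thm:deletion formula}, to the cycle matroid $M = M_G$ with distinguished element $e$, and to show that the sum over $S$ vanishes while the contraction term takes the product form claimed. First I would record that $e$ is not a coloop of $M_G$: since $H_1\ssm e$ is connected and contains both endpoints of $e$, these are joined by a path in $G\ssm e$, so $e$ lies on a cycle of $G$. (We may also assume $M_G$ is simple, passing to its simplification if necessary, which affects none of the polynomials involved.) For the contraction term, contracting $e$ identifies its two endpoints; since $H_1$ and $H_2$ meet only along $e$ and its vertices, $G/e$ is the one-vertex union of $H_1/e$ and $H_2/e$, so that $M_e = M_G/e \cong M_{H_1/e}\oplus M_{H_2/e}$ and $P_{M_e}(t) = P_{H_1/e}(t)\,P_{H_2/e}(t)$ by Proposition~\ref{prop:direct sum}. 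The term $P_{M\ssm e}(t) = P_{G\ssm e}(t)$ is already in the desired form, so it remains only to prove that $\sum_{F\in S}\tau(M_{F\cup e})\,t^{(\crk F)/2}P_{M^F}(t) = 0$.

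For this it suffices to show $\tau(M_{F\cup e}) = 0$ for every $F\in S$. Fix $F\in S$ and set $F_i = F\cap E(H_i)$. Since $e\notin F$ and $E(H_1)\cap E(H_2) = \{e\}$, we have $F = F_1\sqcup F_2$ with $F_i\subseteq E(H_i)$. Because $H_1$ and $H_2$ overlap only in $e$ and its vertices, contracting $F\cup e$ in $G$ means contracting $e$ (the shared edge) and then contracting $F_1$ inside $H_1/e$ and $F_2$ inside $H_2/e$; the resulting graph is the one-vertex union of $H_1/(F_1\cup e)$ and $H_2/(F_2\cup e)$, glued at the vertex onto which the endpoints of $e$ collapse. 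Since the cycle matroid of a one-vertex union of graphs is the direct sum of the cycle matroids, this gives
\[M_{F\cup e} \;\cong\; \bigl(M_{H_1}/(F_1\cup e)\bigr)\ \oplus\ \bigl(M_{H_2}/(F_2\cup e)\bigr).\]
By Lemma~\ref{lem:vanishing tau}, we will be done once we check that each summand has positive rank, i.e.\ that $F_i\cup e$ is a \emph{proper} flat of the restriction $M_{H_i}$ for $i = 1,2$ (note that $F_i\cup e = (F\cup e)\cap E(H_i)$ is indeed a flat of $M_{H_i}$, being the restriction of the flat $F\cup e$ of $M_G$, and similarly $F_i$ is a flat of $M_{H_i}$).

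This last verification is where both connectivity hypotheses enter, and I expect it to be the crux of the argument. Suppose, for contradiction, that $F_i\cup e$ is all of $E(H_i)$ for some $i$, say $i = 1$. Then $F_1 = E(H_1)\ssm e$, since $e\notin F$. But $F_1$ is a flat of $M_{H_1}$, whereas $E(H_1)\ssm e$ is \emph{not} a flat of $M_{H_1}$: because $H_1\ssm e$ is connected, the two endpoints of $e$ are joined by a path avoiding $e$, so $e$ lies in the closure of $E(H_1)\ssm e$ in $M_{H_1}$. This contradiction shows $F_i\cup e\subsetneq E(H_i)$ for both $i$; a proper flat has rank strictly smaller than the rank of the matroid, so $\rk\!\bigl(M_{H_i}/(F_i\cup e)\bigr) = \rk M_{H_i} - \rk_{M_{H_i}}(F_i\cup e) > 0$. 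Hence both summands above have positive rank, so $\tau(M_{F\cup e}) = 0$ for every $F\in S$, the sum over $S$ vanishes, and Theorem~\ref{thm:deletion formula} yields exactly $P_G(t) = P_{G\ssm e}(t) - tP_{H_1/e}(t)P_{H_2/e}(t)$.
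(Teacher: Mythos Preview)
Your proof is correct and follows essentially the same route as the paper: apply the deletion formula, identify $M_{G/e}\cong M_{H_1/e}\oplus M_{H_2/e}$ via Proposition~\ref{prop:direct sum}, and kill every term of the sum over $S$ by writing $M_{F\cup e}$ as the direct sum of $M_{H_1}/(F_1\cup e)$ and $M_{H_2}/(F_2\cup e)$ and invoking Lemma~\ref{lem:vanishing tau}, using connectedness of $H_i\ssm e$ to rule out $F_i\cup e = E(H_i)$. Your version is slightly more detailed (the explicit coloop check, the observation that $F_i$ is a flat of $M_{H_i}$ by restriction), but the argument is the same.
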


\begin{proof}
Applying Theorem~\ref{thm:deletion formula} we get
\[P_G(t) + tP_{G/e}(t) = P_{G\ssm e}(t) + \sum_{F\in S} \tau({G/(F\cup e)})P_F(t).\]
The graph $G/e$ is isomorphic to the union of $H_1/e$ and $H_2/e$ joined at a vertex, so it has the same matroid as the disjoint union of $H_1/e$ and $H_2/e$, namely $M_{H_1/e}\oplus M_{H_2/e}$. 
So by Proposition~\ref{prop:direct sum} we have
$P_{G/e}(t) = P_{H_1/e}(t)P_{H_2/e}(t)$.

Thus our result will follow if we can show that
$\tau({G/(F\cup e)}) = 0$ whenever $F \in S$.  
Let $E_i$ be the set of edges of $H_i$, and set
$F_i = F \cap E_i$ for $i = 1,2$.  Then
$G/(F\cup e)$ is isomorphic to the union of $H_1/(F_1\cup e)$ and $H_2/(F_2\cup e)$
at a vertex, so unless $F_1 \cup e$ or $F_2 \cup e$ is the entire edge set of $H_1$, $H_2$ respectively, Lemma~\ref{lem:vanishing tau} implies that $\tau({G/(F\cup e)}) = 0$.  But if 
$F_i = E_i \ssm e$, then the endpoints of $e$ are already connected by edges in $F$, so $F$ is not a flat.
\end{proof}

\newcommand{\Par}{\operatorname{Par}}
\begin{remark}
If $G$ is the parallel connection of $H_1$ and $H_2$ with connection edge $e$, the matroid $M_G$ is a \textbf{parallel connection matroid} $\Par(M_{H_1}, M_{H_2})$, as defined in \cite{B71}, for instance. 
The properties used in the proof of Theorem~\ref{thm:edge-gluing formula} still hold in this more general context.  For instance, $\Par(M_1,M_2)/e = (M_1/e) \oplus (M_2/e)$ and $\Par(M_1,M_2)/d = \Par(M_1/d,M_2)$ if $d \in E(M_1) \setminus e$.  So the same proof gives the more general formula
\[P_M(t) = P_{M\ssm e}(t) - tP_{M_1/e}(t)P_{M_2/e}(t)\]
whenever $M = \Par(M_1, M_2)$ is a parallel connection matroid with connection element $e$ and $M_1\ssm e$, $M_2\ssm e$ are connected.
\end{remark}

\begin{example}
Consider a \textbf{double-cycle} graph $C_{m,n}$ obtained as the parallel connection of an $m$-cycle and an $n$-cycle.  

\begin{figure}[!ht]
\centering
\begin{tikzpicture}[thick,acteur/.style={circle,fill=black,thick,inner sep=2pt,minimum size=0.2cm}]\label{twocy}
\node (a1) at (2.408,0) [acteur]{};
\node (a2) at (1.204,-0.7)[acteur]{}; 
\node (a3) at (1.204,-2.1) [acteur]{};
\node (a4) at (2.408,-2.8) [acteur]{};
\node (a5) at (3.612,-0.7) [acteur,blue]{};
\node (a6) at (3.612,-2.1)[acteur,blue]{}; 
\node (a7) at (4.942,-0.266)[acteur]{}; 
\node (a8) at (4.942,-2.366)[acteur]{}; 
\node (a9) at (5.75,-1.316)[acteur]{}; 
\draw[black] (a1) -- (a2);
\draw[black] (a2) -- (a3);
\draw[black] (a3) -- (a4);
\draw[black] (a1) -- (a5);
\draw[blue] (a5) -- (a6) node [midway, fill=white] {$e$};;
\draw[black] (a4) -- (a6);
\draw[black] (a5) -- (a7);
\draw[black] (a6) -- (a8);
\draw[black] (a8) -- (a9);
\draw[black] (a7) -- (a9);
\draw (2.408,-1.4) node {$C_6$};
\draw (4.592,-1.4) node {$C_5$};
\end{tikzpicture}
\caption{A double-cycle graph $C_{6,5}$.}
\label{dcex}
\end{figure}
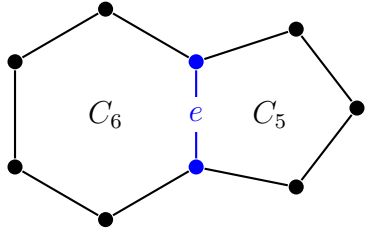

If $e$ is the connection edge, then $C_{m,n} \ssm e \cong C_{m+n-1}$.  So Theorem~\ref{thm:edge-gluing formula} gives
\begin{align*}
P_{C_{m,n}}(t) & = P_{C_{m+n-2}}(t) - tP_{C_{m-1}}(t)P_{C_{n-1}}(t),
\end{align*}
and thus the coefficient of $t^k$ in $P_{C_{m,n}}(t)$ is
\begin{align*}
& \frac{1}{k+1}\binom{m+n-k-4}{k}\binom{m+n-2}{k} \\ & - 
\sum_{i+j = k-1}\frac{1}{(i+1)(j+1)}\binom{n-i-3}{i}\binom{n-1}{i}\binom{m-j-3}{j}\binom{m-1}{j}.
\end{align*}
\end{example}

\subsection{Example: partial saw graphs}
More generally, Theorem~\ref{thm:edge-gluing formula} can be used to compute the Kazhdan--Lusztig polynomials of an iterated parallel connection of any number of cycles, or equivalently any planar graph obtained from a cycle by adding a set of non-crossing diagonals.  We illustrate this for two families of examples.  For $n \ge 3$ and $0 \le r \le n$, define a \textbf{partial saw graph}  $S_{n,r}$ to be a graph obtained by forming an iterated parallel connection with $r \le n$ three-cycles at $r$ different edges of an $n$-cycle.  Alternatively, it is an $(n+r)$-cycle with $r$ noncrossing chords added joining vertices at distance two. 
See Figure~\ref{saw}.
Note that while this can describe several different non-isomorphic graphs, all such graphs have isomorphic matroids.  We extend this to $n=2$ by letting a $2$-cycle be a single edge (or a pair of parallel edges, which has the same lattice of flats), so 
$S_{2,1} = C_3$ and $S_{2,2}$ is the parallel connection of two $3$-cycles.

\begin{figure}[!ht]
\centering
\begin{tikzpicture}[thick,acteur/.style={circle,fill=black,thick,inner sep=2pt,minimum size=0.2cm}]
\foreach \r in {0,60,...,359}
    \node at (\r:2) [acteur]{};
\foreach \r in {0,60,...,359}    
    \draw[black] (\r:2) -- (\r+60:2);
\foreach \r in {30,90,...,210}
{ 
    \node at (\r:2.5) [acteur]{};
    \draw[black] (\r-30:2)--(\r:2.5);
    \draw[black] (\r+30:2)--(\r:2.5);
};        
\end{tikzpicture}
\caption{A partial saw graph $S_{6,4}$.}\label{saw}
\end{figure}
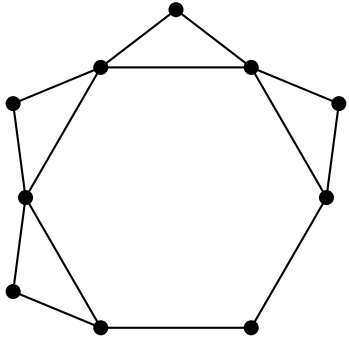

For $r > 0$, let us apply Theorem~\ref{thm:edge-gluing formula} to $S_{n,r}$, which we consider as 
the parallel connection of $S_{n,r-1}$ and $C_3$.  Let $e$ be the connection edge, so $e$ is on the central $n$-cycle and is not on any of the other $3$-cycles.
It is easy to see that $S_{n,r} \ssm e \cong S_{n+1,r-1}$ and $S_{n,r-1}/e \cong S_{n-1,r-1}$,  so our Theorem gives the following recursive formula:
\[P_{S_{n,r}}(t) =  P_{S_{n+1,r-1}}(t) - tP_{S_{n-1,r-1}}(t)P_{C_3/e}(t) = 
P_{S_{n+1,r-1}}(t) - tP_{S_{n-1,r-1}}(t),\]
valid for $n\ge 3$, $r\ge 1$.  In order to make the formula hold for $n=1,2$ we can define
$P_{S_{1,0}}(t) = P_{S_{1,1}}(t) = 0$, and $P_{S_{0,0}}(t) = t^{-1}$.

We can solve this recursion starting with $S_{n,0} = C_n$ to get the following general formula:
\begin{theorem}\label{thm:KL of Snr}
We have 
\[P_{S_{n,r}}(t) = \sum_{k=0}^r (-t)^k\binom{r}{k}p_{n+r-2k}(t),\]
where $p_m(t) = P_{C_m}(t)$ for $m \ge 2$ and 
$p_1(t) = 0$, $p_0(t) = t^{-1}$.
\end{theorem}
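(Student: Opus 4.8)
The plan is to prove the identity by induction on $r$, using the recursion
\[
P_{S_{n,r}}(t) = P_{S_{n+1,r-1}}(t) - t\,P_{S_{n-1,r-1}}(t)
\]
established just above; together with the stated conventions $P_{S_{1,0}}(t) = P_{S_{1,1}}(t) = 0$ and $P_{S_{0,0}}(t) = t^{-1}$, one checks that this recursion also holds for $n = 1, 2$. For the base case $r = 0$ the asserted formula reduces to its single term $p_n(t)$, and $P_{S_{n,0}}(t) = P_{C_n}(t) = p_n(t)$ holds by the definition of $p_m$ when $n \ge 2$ and by the conventions $p_1(t) = 0 = P_{S_{1,0}}(t)$, $p_0(t) = t^{-1} = P_{S_{0,0}}(t)$ when $n \le 1$.

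For the inductive step, I would assume the formula holds with $r$ replaced by $r-1$ (for all values of the first index that occur), substitute it into both terms on the right-hand side of the recursion, and use $(-t)(-t)^k = (-t)^{k+1}$ to obtain
\[
P_{S_{n,r}}(t) = \sum_{k=0}^{r-1}(-t)^k\binom{r-1}{k}p_{n+r-2k}(t) + \sum_{k=0}^{r-1}(-t)^{k+1}\binom{r-1}{k}p_{n+r-2(k+1)}(t).
\]
Shifting the index by $k \mapsto k-1$ in the second sum, both sums then run over $0 \le k \le r$ with the common factor $(-t)^k p_{n+r-2k}(t)$, and the total coefficient is $\binom{r-1}{k} + \binom{r-1}{k-1} = \binom{r}{k}$ by Pascal's identity --- the usual conventions $\binom{r-1}{-1} = \binom{r-1}{r} = 0$ take care of the extreme terms $k = 0$ and $k = r$. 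This is exactly the claimed expression. Because $0 \le r \le n$, the subscript $n+r-2k$ always lies in $[n-r, n+r] \subseteq \Z_{\ge 0}$, so $p_m$ is only ever evaluated at $m \ge 0$ and the conventions above suffice.

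The core computation is just Pascal-triangle bookkeeping, so I do not expect real difficulty there; the point that needs genuine attention is the boundary. Since the recursion was derived geometrically only for $n \ge 3$, I would check separately the finitely many degenerate instances --- $S_{n,0} = C_n$, and $S_{2,1} = C_3$, $S_{2,2}$ --- against the closed formula. These are immediate: $P_{C_3}(t) = 1 = p_3(t) - t\,p_1(t)$, and $S_{2,2} \cong S_{3,1}$ as graphs (each is $K_4$ with one edge removed), so $P_{S_{2,2}}(t) = P_{S_{3,1}}(t)$, consistent both with the recursion (the $P_{S_{1,1}}$-term vanishing) and with the formula. Once these checks, and the consistency of the recursion at $n = 1, 2$, are in hand, the induction closes and the theorem follows; the whole argument amounts to solving a linear recursion with binomial coefficients.
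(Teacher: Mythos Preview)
Your proof is correct and follows exactly the approach the paper intends: the paper itself only says ``We can solve this recursion starting with $S_{n,0} = C_n$ to get the following general formula'' without writing out the details, and your induction on $r$ via Pascal's identity, together with the boundary checks at $n=1,2$, is precisely how that sentence is justified. In particular, your verification that the recursion extends consistently to $n=1$ (using $P_{S_{2,0}} - t\,P_{S_{0,0}} = 1 - 1 = 0 = P_{S_{1,1}}$) and your observation that $S_{2,2}\cong S_{3,1}$ are the right checks to make the induction close.
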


For example, we have
\begin{align*}
P_{S_{3,3}}(t) & = P_{C_6}(t) - t\binom31 P_{C_4}(t) + t^2\binom32 P_{C_2}(t) -t^3\cdot t^{-1} \\
& = 1+9t+5t^2 - 3t(1+2t) + 3t^2 - t^2 \\
& = 1 + 6t + t^2. 
\end{align*}

The sequence of numbers $\tau(S_{k,k})$ is the sequence of ``Motzkin sums" (\cite[sequence A00504]{OEIS}).

\subsection{Fan graphs}

For our second application of Theorem~\ref{thm:edge-gluing formula}, we give a simpler proof of a formula of Liu, Xie and Yang \cite{LXY} for the Kazhdan--Lusztig polynomials of fan graphs.
For $n\ge 1$, the fan graph $F_n$ is a graph with $n+1$ vertices $\{0,1,2,\dots, n\}$ and with edges 
$(0,i)$ for $1 \le i \le n$ and $(i, i+1)$ for $1 \le i \le n-1$.  Thus $F_1$ is a single edge, $F_2 \cong C_3$, and $F_3 \cong K_4 \ssm e$. 

\begin{theorem}[\cite{LXY}]\label{thm:KL of Fn}
We have 
\begin{equation}\label{eq:fan formula}
P_{F_n}(t) = \sum_{k=0}^{\lfloor \frac{n-1}2 \rfloor} \frac{1}{k+1}\binom{n-1}{k,k,n-2k-1}.
\end{equation}
\end{theorem}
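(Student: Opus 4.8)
The plan is to derive \eqref{eq:fan formula} by recognizing the fan graph as an iterated parallel connection and applying Theorem~\ref{thm:edge-gluing formula}. First I would observe that $F_n$ is the parallel connection of $F_{n-1}$ and a triangle $C_3$: the triangle is glued along the edge $(0,n-1)$, adding the new vertex $n$ together with the edges $(0,n)$ and $(n-1,n)$. The connection edge is $e=(0,n-1)$. To invoke Theorem~\ref{thm:edge-gluing formula} I need $F_{n-1}\ssm e$ and $C_3\ssm e$ to be connected, which is clear for the triangle, and for $F_{n-1}$ follows since removing one spoke still leaves all vertices joined through the path $1,2,\dots,n-1$ and the remaining spokes. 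One then checks the two identifications $F_n\ssm e\cong F_{n+1}$ — deleting $(0,n-1)$ merges the last triangle back into the path structure, producing a fan on one more triangle's worth of boundary — and $F_{n-1}/e$: contracting $(0,n-1)$ identifies vertex $n-1$ with $0$, yielding a graph whose matroid is that of $F_{n-2}$ (possibly after identifying a resulting parallel edge). This last point is where the bookkeeping with small indices must be pinned down carefully; it is the step I expect to be the main obstacle, since the combinatorial description of fan graphs degenerates for $n\le 2$ and the contraction introduces multiple edges that need to be cleaned up via simplification.

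Having set this up, Theorem~\ref{thm:edge-gluing formula} gives the recursion
\[
P_{F_n}(t) = P_{F_{n+1}}(t) - tP_{F_{n-2}}(t)P_{C_3/e}(t) = P_{F_{n+1}}(t) - tP_{F_{n-2}}(t),
\]
using $P_{C_3/e}(t)=1$ since $C_3/e$ has rank at most $1$. Rewriting this as $P_{F_{n+1}}(t) = P_{F_n}(t) + tP_{F_{n-2}}(t)$, we obtain a three-term linear recursion for the sequence $(P_{F_n})_{n\ge 1}$ with polynomial coefficients, subject to initial data $P_{F_1}(t)=1$ (single edge, Boolean), $P_{F_2}(t)=P_{C_3}(t)=1$, and an appropriately chosen convention for $P_{F_0}$ (likely $P_{F_0}(t)=t^{-1}$, in parallel with the $p_0(t)=t^{-1}$ convention used for saw graphs, or alternatively $0$, whichever makes the $n=1,2$ cases of the recursion consistent).

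The final step is to verify that the proposed closed form \eqref{eq:fan formula} satisfies this recursion and the initial conditions. Write $a_n(k) = \frac{1}{k+1}\binom{n-1}{k,k,n-2k-1}$ for the coefficient of $t^k$ in $P_{F_n}(t)$. Extracting the coefficient of $t^k$ from $P_{F_{n+1}}(t)=P_{F_n}(t)+tP_{F_{n-2}}(t)$ reduces the claim to the identity
\[
\frac{1}{k+1}\binom{n}{k,k,n-2k} = \frac{1}{k+1}\binom{n-1}{k,k,n-2k-1} + \frac{1}{k}\binom{n-3}{k-1,k-1,n-2k-1},
\]
which is a routine multinomial manipulation — one expands each trinomial coefficient in factorials, clears denominators, and checks a polynomial identity in $n$ and $k$ — entirely analogous to the computation carried out in the $U_{1,d}$ example. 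Checking the base cases $n=1,2$ against the formula is immediate. Alternatively, one could avoid guessing the closed form by solving the recursion directly via generating functions: setting $\mathcal F(x,t)=\sum_{n\ge 0}P_{F_n}(t)x^n$, the recursion translates into $\mathcal F(x,t)(1 - x - tx^3) = (\text{low-order correction terms})$, and extracting coefficients recovers \eqref{eq:fan formula} after identifying the trinomial coefficients in the expansion of $(1-x-tx^3)^{-1}$; this route makes the main obstacle purely the determination of the correct boundary terms rather than a separate verification.
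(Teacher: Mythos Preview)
Your argument has a genuine error at the very first step: the identification $F_n\ssm e\cong F_{n+1}$ is false.  With $e=(0,n-1)$, the graph $F_n\ssm e$ still has $n+1$ vertices and $2n-2$ edges, whereas $F_{n+1}$ has $n+2$ vertices and $2n+1$ edges, so they cannot be isomorphic (even at the level of matroids).  What you actually get by deleting the spoke $(0,n-1)$ is a fan with one spoke missing, not a larger fan.  Consequently the recursion $P_{F_{n+1}}(t)=P_{F_n}(t)+tP_{F_{n-2}}(t)$ is wrong: it predicts $P_{F_4}(t)=P_{F_3}(t)+tP_{F_1}(t)=1+2t$, while the correct value from \eqref{eq:fan formula} is $1+3t$.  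Correspondingly, the multinomial identity you propose to verify,
\[
\frac{1}{k+1}\binom{n}{k,k,n-2k}=\frac{1}{k+1}\binom{n-1}{k,k,n-2k-1}+\frac{1}{k}\binom{n-3}{k-1,k-1,n-2k-1},
\]
fails already at $n=3$, $k=1$ (left side $3$, right side $2$), so no amount of ``routine manipulation'' will establish it.

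The point is that deleting a spoke from a fan does not produce another fan, and there is in fact no three-term linear recursion with constant coefficients for $P_{F_n}(t)$.  The paper deals with this by enlarging the family: one introduces the graphs $F_{n,r}$ obtained from $F_n$ by deleting $r$ consecutive spokes, so that $F_{n,r}\ssm e=F_{n,r+1}$ really holds.  Iterating Theorem~\ref{thm:edge-gluing formula} then telescopes down to the cycle $F_{n,n-2}\cong C_{n+1}$ and yields a convolution-type identity
\[
P_{F_n}(t)=P_{C_{n+1}}(t)-t\sum_{k=2}^{n-1}P_{C_k}(t)P_{F_{n-k}}(t),
\]
which is solved via generating functions relating $\Phi_F$ to the (already known) $\Phi_C$.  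Your instinct to use Theorem~\ref{thm:edge-gluing formula} on the triangle decomposition is the right starting point, but the deletion forces you outside the class of fan graphs, and that is the missing idea.
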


In order to apply Theorem~\ref{thm:edge-gluing formula} to compute
$P_{F_n}(t)$, we need to consider a larger class of graphs.  Let $F_{n,r}$ be $F_n$ with edges 
$(0,n-r), \dots, (0,n-1)$ deleted. Thus $F_{n,0} = F_n$ and $F_{n,n-2}\cong C_{n+1}$.
For any $0 \le r \le n-3$, the graph $F_{n,r}$ is the parallel connection of  
$F_{n-r-1}$ and a copy of $C_{r+3}$ with connection edge $e = (0,n-r-1)$.
Furthermore, $F_{n-r-1}/e \cong F_{n-r-2}$ and $F_{n,r} \ssm e \cong F_{n,r+1}$, so Theorem~\ref{thm:edge-gluing formula} implies
\[P_{F_{n,r+1}}(t) - P_{F_{n,r}}(t) = tP_{C_{r+2}}(t)P_{F_{n-r-2}}(t).\]
Adding this equation for $0\le r \le n-3$, and putting $k = r+2$, we get
\begin{equation}\label{eq:relating F and C}
P_{F_n}(t) = P_{C_{n+1}}(t) - t\sum_{k = 2}^{r-1}P_{C_k}(t)P_{F_{n-k}}(t).
\end{equation}

To solve this recursion, consider the generating series
\[\Phi_C(t,u) := \sum_{n\ge 1} P_{C_{n+1}}(t)u^n, \;\;\; \Phi_F(t,u) := \sum_{n\ge 1} P_{F_n}(t)u^n.\]
Then summing $u^n$ times the equation \eqref{eq:relating F and C} gives 
\begin{equation}\label{eq:relating PhiF and PhiC}
\Phi_F(t,u) = \Phi_C(t,u) - tu\,\Phi_C(t,u)\Phi_F(t,u),
\end{equation}
so the series $\Phi_F$ and $\Phi_C$ determine each other.

In \cite{LXY} it is explained that the formula
\eqref{eq:fan formula} is equivalent to 
\begin{align*}
\Phi_F(t,u) & = \frac{2u}{1-u + \sqrt{(1-u)^2 -4tu^2}} \\
& = \frac{1}{2tu}\left[1-u - \sqrt{(1-u)^2 -4tu^2}\right].\\
\end{align*}
(Note that this formula differs from the one in \cite{LXY} because our sum for $\Phi_F(t,u)$ starts at $n=1$ instead of $n=0$.)

Plugging this into \eqref{eq:relating PhiF and PhiC}, we have
\begin{align*}
\Phi_C(t,u) & = \frac{\Phi_F(t,u)}{1- tu\Phi_F(t,u)}\\
& = \frac{\frac{1}{2tu}\left[1-u - \sqrt{(1-u)^2 -4tu^2}\right]}{1 - tu\frac{1}{2tu}\left[1-u - \sqrt{(1-u)^2 -4tu^2}\right]}\\
& = \frac{1}{tu}\cdot \frac{1-u-\sqrt{(1-u)^2 -4tu^2}}{1+u+\sqrt{(1-u)^2 -4tu^2}}\\
&= \frac{1}{tu}\cdot  \frac{1-u^2 - 2\sqrt{(1-u)^2 -4tu^2} +(1-u)^2 - 4tu^2}{(1+u)^2 - (1-u)^2+4tu^2}\\
&=\frac{1-u-2tu^2 - \sqrt{(1-u)^2 -4tu^2}}{2tu^2(1+tu)}.
\end{align*}
This agrees with the formula for $\Phi_C(t,u)$ given in \cite{PWY}, where it is also shown that this formula is equivalent to the formula \eqref{eq:KL coeff of Cn} for the coefficients of $P_{C_{n+1}}(t)$.
Thus we obtain a self-contained proof of Theorem~\ref{thm:KL of Fn} using Theorems~\ref{thm:deletion formula} and \ref{thm:edge-gluing formula}.

\begin{remark}
It is easy to see that the coefficient of $t$ in the Kazhdan--Lusztig polynomial of an $n$-cycle with $k$ non-crossing edges is 
$\binom{n}{k} - n - k$, so in particular it is independent of the edges chosen (if the diagonals are allowed to cross, however, this is no longer true).   However, Theorem~\ref{thm:KL of Fn} gives $P_{F_5}(t) = 1 + 6t + 2t^2$, and we have already seen that $P_{S_{3,3}}(t) = 1 + 6t + t^2$.  These are both triangulations of $6$-cycles, so this shows that the quadratic coefficient is sensitive to the arrangement of diagonals.  
\end{remark}

\subsection{A thagomizer lemma}

We finish with one more simple application of Theorem~\ref{thm:deletion formula}.  Each of our applications has relied on some simplification of the potentially complicated sum on the right side of \eqref{eqn:KL deletion formula}.  
The application to uniform matroids $U_{1,d}$ used two facts: (1) flats of a given rank are easy to count and (2) for each proper flat $F$ the localization $M^F$ is Boolean, so $P_{M^F}(t) = 1$.  On the other hand, in Theorem~\ref{thm:edge-gluing formula} all the numbers $\tau(M_{F\cup e})=0$, so all terms in the sum vanish.

Now, we give a situation in which the formula is simple because the set $S$ that is summed over is very small.  
Let $e$ be an edge of a graph $G$, and suppose that 
$G$ contains a triangle with edges $e$, $e'$, $e''$. A flat in $L(M_G)$ cannot contain exactly two of these edges of the triangle, and so a flat $F$ that is in $S$ cannot contain any edge of the triangle.

We apply this observation to the thagomizer graph $T_n$ considered in \cite{G}. This is a graph obtained from a complete bipartite graph $K_{2,n}$ by adding a single edge $e$ joining the two vertices in the first part.  Every edge of $T_n$ is part of a triangle containing $e$, and so by the previous paragraph, if we apply our deletion formula to the edge $e$, the set $S$ contains only the empty flat $\emptyset$.  Furthermore, the summand corresponding to this flat vanishes, because $G/e$ is a tree and so $\tau(G/e)=0$.  Thus we obtain the following result.

\begin{lemma}[\protect{\cite[Theorem 5.8]{GPY}}]
$P_{T_n}(t) = P_{K_{2,n}}(t) - t$.
\end{lemma}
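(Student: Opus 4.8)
The plan is to apply Theorem~\ref{thm:deletion formula} to the graph $G = T_n$, deleting the special edge $e$ that joins the two vertices in the first part of $K_{2,n}$, and then argue that every term in the sum over $S$ vanishes. First I would check that $e$ is not a coloop of $T_n$: since $e$ lies in a triangle (indeed, for each of the $n$ vertices $v$ in the second part, the two edges of $K_{2,n}$ incident to $v$ together with $e$ form a triangle), deleting $e$ leaves the graph connected, so $e$ is not a coloop and the hypotheses of Theorem~\ref{thm:deletion formula} are met. Then $P_{T_n \ssm e}(t) = P_{K_{2,n}}(t)$, which is the first term on the right side of \eqref{eqn:KL deletion formula}.

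Next I would analyze the set $S = \{F \in L(M_{T_n}) \mid e \notin F,\ F \cup e \in L(M_{T_n})\}$. The key observation, already noted in the paragraph preceding the lemma, is that a flat of a graphic matroid cannot contain exactly two edges of any triangle (if two of the three edges of a cycle lie in a flat, so does the third). Since every edge $e'$ of $T_n$ other than $e$ lies in a triangle of the form $\{e, e', e''\}$, if $F$ were to contain some edge $e' \neq e$, then because $F$ does not contain $e$, the flat closure condition forces $e$ or $e''$ into $\overline{F}$; more carefully, $\{e, e', e''\}$ is a triangle, $F$ contains $e'$ but not $e$, so $F$ cannot be a flat unless it also omits $e''$ — but then $F \cup e$ contains two edges $e, e'$ of this triangle and not $e''$, contradicting that $F \cup e$ is a flat. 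Hence no edge $e'$ of $T_n$ can lie in $F$, which forces $F = \emptyset$. So $S = \{\emptyset\}$.

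Finally I would deal with the single remaining term in the sum, corresponding to $F = \emptyset$: its contribution to \eqref{eqn:KL deletion formula} is
\[
\tau(M_{\emptyset \cup e})\, t^{(\crk \emptyset)/2} P_{M^\emptyset}(t) = \tau(M_{T_n}/e)\, t^{(\rk T_n)/2}\cdot 1,
\]
since $M^\emptyset$ is the rank-zero matroid with $P_{M^\emptyset}(t)=1$. The graph $T_n / e$ is obtained from $K_{2,n}$ by identifying its two first-part vertices, which produces a star (a tree), so $M_{T_n/e}$ is a Boolean matroid and in particular $\tau(M_{T_n/e}) = 0$ by Lemma~\ref{lem:vanishing tau} (or directly, since $P$ of a Boolean matroid is $1$, which has even degree $0$). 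The remaining term $-tP_{M_e}(t)$ in \eqref{eqn:KL deletion formula} is $-tP_{T_n/e}(t) = -t$, again because $T_n/e$ is a tree and so has trivial Kazhdan--Lusztig polynomial. Assembling everything, \eqref{eqn:KL deletion formula} reads $P_{T_n}(t) = P_{K_{2,n}}(t) - t + 0$, which is the claimed identity.

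I do not expect any serious obstacle here; the only point requiring a little care is the combinatorial argument that $S = \{\emptyset\}$, i.e.\ correctly using the triangle-flat condition in both directions (for $F$ itself and for $F \cup e$) to rule out every nonempty $F$. Everything else — that $e$ is not a coloop, that $T_n/e$ is a tree, and the evaluation of the trivial Kazhdan--Lusztig polynomials — is immediate from the definitions and Proposition~\ref{prop:direct sum}.
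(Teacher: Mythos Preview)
Your proposal is correct and follows essentially the same route as the paper: apply Theorem~\ref{thm:deletion formula} at the special edge $e$, use the triangle observation to show $S=\{\emptyset\}$, and note that $T_n/e$ is (after simplification) a star so that both the $\tau$-term and $-tP_{M_e}(t)$ reduce to $0$ and $-t$ respectively. The only minor point is that your appeal to Lemma~\ref{lem:vanishing tau} for $\tau(T_n/e)=0$ needs $n\ge 2$ (so that the Boolean matroid splits as a sum of two positive-rank pieces); this is the same implicit assumption the paper makes.
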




\newcommand{\etalchar}[1]{$^{#1}$}
\providecommand{\bysame}{\leavevmode\hbox to3em{\hrulefill}\thinspace}
\providecommand{\MR}{\relax\ifhmode\unskip\space\fi MR }
\providecommand{\MRhref}[2]{%
  \href{http://www.ams.org/mathscinet-getitem?mr=#1}{#2}
}
\providecommand{\href}[2]{#2}

\end{document}